\newtheorem{theorem}{\bf Theorem}[section]
\newtheorem{lemma}[theorem]{\bf Lemma}
\newtheorem{defn}{\bf Definition}[section]
\newtheorem{remark}{{\bf Remark}}[section]
\newenvironment{proof}{\noindent{\em Proof.}}{\quad \hfill$\Box$\vspace{2ex}}
\def \bR {\Bbb R}
\def \and {\, \mb
ox{\rm and}\, }
\def \supp {\,{\rm supp}\,}
\def \Re {\,{\rm Re}\,}
\def \min{\,{\rm min}}
\def \l {\left}
\def \r {\right}
\newcommand{\Rmnum}[1]{\expandafter\@slowromancap\romannumeral #1@}
\begin{document}

\title {\bf Sharp Bounds for Oscillatory Integral Operators with Homogeneous Polynomial Phases}

\author{Danqing He \thanks{Department of Mathematics, Sun Yat-sen (Zhongshan) University, Guangzhou, 510275, P.R. China. E-mail address: {\it hedanqing@mail.sysu.edu.cn}.}\,\,\,,\quad
Zuoshunhua Shi \thanks{School of Mathematics and Statistics, Central South University, Changsha, People's Republic of China. E-mail address:
        {\it shizsh@csu.edu.cn}.} }

\date{}
\maketitle{}

\begin{abstract}
We obtain sharp $L^p$ bounds for oscillatory integral operators with generic homogeneous polynomial phases in several variables. The phases considered in this paper satisfy the rank one condition which is an important notion introduced by Greenleaf, Pramanik and Tang. Under certain additional assumptions, we can establish sharp damping estimates with critical exponents to prove endpoint $L^p$ estimates.
\end{abstract}
\textbf{Keywords:} Oscillatory integral operator; Homogeneous polynomial phase; Rank one condition; Optimal decay\\

\noindent\textbf{2010 Mathematics Subject Classification:} 42B20 47G10

\section{Introduction}
Let $T_{\lambda}$ be an oscillatory integral operator of the form
\begin{equation}\label{sec1 general oio}
T_{\lambda}f(x)=\int_{\bR^{n_Y}} e^{i\lambda S(x,y)}
\varphi(x,y)f(y)dy, ~~~x\in \bR^{n_X},
\end{equation}
where $n_X,n_Y$ are two positive integers, $\lambda$ is a real parameter, $S$ is a real-valued smooth function and $\varphi$ is a smooth cut-off function near the origin in $\bR^{n_X}\times\bR^{n_Y}$. We shall refer to $T_{\lambda}$ as an $(n_X+n_Y)-$dimensional oscillatory integral operator. In this paper, our purpose is to establish sharp $L^p$ bounds for these operators with homogeneous polynomial phases.

In the $(1+1)-$dimensional setting, Phong and Stein \cite{PS1997} proved the remarkable theorem that the sharp $L^2$ decay estimate of $T_{\lambda}$ is determined by the Newton polyhedron of the real -analytic phase $S$; for some related results on oscillatory integrals and oscillatory integral operators see \cite{varchenko,hormander2,PS1992,PS1994,seeger2,
greenblatt1,Gilula}. This result was extended to the case of smooth phases by Greenblatt \cite{greenblatt2}; see Rychkov \cite{rychkov} for a partial result. On the other hand, estimates of $T_{\lambda}$ on $L^p$ were also studied by many authors \cite{greenleafseeger1,PSS2001,yangchanwoo,
yangchanwoo2,gressmanxiao,ShiYan,Shi,ShiXuYan,Gilula-Gressman-Xiao}. Recently, general sharp $L^p$ decay estimates have been proved by Xiao \cite{Xiao2017}. For a survey on degenerate oscillatory and Fourier integral operators, we refer the reader to Greenleaf-Seeger \cite{greenleafseeger2}.

It is difficult to generalize all of the one dimensional results to general higher dimensional cases. However, some uniform estimates were also obtained with non-sharp decay rates; see \cite{carbery,CaW,Christ-Li-Tao-Thiele,PSS2001}.
Greenleaf, Pramanik and Tang \cite{GPT} introduced the important notion of rank one condition to establish sharp $L^2$ estimates when the phases are generic homogeneous polynomials; for a earlier result in $(2+1)$ dimensions, see \cite{tangwan}. Under the rank one condition, we shall extend the $L^2$ result in Greenleaf-Pramanik-Tang \cite{GPT} to the $L^p$ setting in this paper. In this direction, Xu and Yan \cite{xuyan} considered the special case $n_X=n_Y$ and obtained sharp $L^p$ estimates for $T_{\lambda}$.

In Section 2, we will prepare some basic tools for our argument. Sharp $L^p$ estimates and endpoint estimates will be given in Sections 3 and 4. For two real numbers $a$ and $b$, we use $a\lesssim b$ to mean $a\leq Cb$ for some constant $C>0$, and $a\approx b$ to mean $a\lesssim b$ and $b\lesssim a$. For a linear operator $T$, the notation $\|T\|_p$ denotes its operator norm on $L^p$.

\section{Preliminaries}
In this section, we shall establish some basic lemmas. A basic property of polynomials will be needed in our application of the operator van der Corput lemma below. The first part of the following lemma is previously known; see Phong-Stein \cite{PS1994,PS1998}.

\begin{lemma}\label{sec2 poly type func lemma}
Assume $P$ is a polynomial in $\bR$ with degree not greater than $d$. Then there exists a constant $C=C(d)$, depending only on $d$, such that for any bounded interval $I\subseteq \bR$,
\begin{equation}\label{sec2 esti of poly type func}
\sum_{k=0}^{d}|I^{\ast}|^k\sup_{x\in I^{\ast} }
|P^{(k)}(x)|
\leq
C \sup_{x\in I } |P(x)|,
\end{equation}
where $I^{\ast}$ is the interval with the same center as $I$ but with twice the length of $I$.

Moreover, assume further $P$ is real-valued. If, in addition, there exist two numbers $\mu>0,A>0$ and a bounded interval $J$ such that $\mu\leq |P(x)| \leq A\mu$ for all $x\in J$, then for all $z\in\mathbb{C}$ and all intervals $I\subseteq J$, $Q_z(x)=|P(x)|^z$ also satisfies the above estimate with a constant $C=C(d,A,z)$ and $I^{\ast}$ replaced by $I^{\ast}\cap J$.
\end{lemma}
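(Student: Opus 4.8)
The first part is classical (it appears in Phong--Stein \cite{PS1994,PS1998}); one quick route is to normalize by an affine change of variable to the case $I=[-1/2,1/2]$, $I^{\ast}=[-1,1]$, then invoke equivalence of norms on the finite-dimensional space of real polynomials of degree $\le d$, noting that there both $P\mapsto\sup_{[-1/2,1/2]}|P|$ and $P\mapsto\sum_{k=0}^{d}2^{k}\sup_{[-1,1]}|P^{(k)}|$ are norms (positive-definiteness of the first because a polynomial vanishing on an interval vanishes identically). Undoing the scaling and using $|I^{\ast}|=2|I|$ recovers \eqref{sec2 esti of poly type func}. So the substance is the second assertion.

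Since $\mu\le|P|\le A\mu$ on the interval $J$ and $J$ is connected, $P$ has constant sign on $J$; hence $\log|P|$ is smooth on $J$ and $Q_z=e^{z\log|P|}$ with $|Q_z|=|P|^{\Re z}$ throughout $J$ (so no branch/sign ambiguity survives, and $\sup|Q_z|$ is genuinely a modulus of a well-defined quantity). Writing $Q_z^{(k)}=B_kQ_z$, the coefficients satisfy $B_0=1$ and $B_{k+1}=B_k'+RB_k$ with $R:=Q_z'/Q_z=z\,P'/P$, so each $B_k$ is a universal polynomial (depending only on $k$) in $R,R',\dots,R^{(k-1)}$ whose monomials $\prod_i R^{(j_i)}$ obey $\sum_i(j_i+1)=k$. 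The plan is to bound each factor $R^{(j)}$ pointwise on $I^{\ast}\cap J$, multiply up, and then compare $\sup_{I^{\ast}\cap J}|Q_z|$ with $\sup_{I}|Q_z|$.

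For the pointwise bounds, apply the first part to $P$ on $I$: $\sup_{I^{\ast}}|P^{(j)}|\le C(d)|I^{\ast}|^{-j}\sup_I|P|$ for $0\le j\le d$. Since $I\subseteq J$ gives $\sup_I|P|\le A\mu$, and $I^{\ast}\cap J\subseteq J$ gives $|P|\ge\mu$ there, this yields $|P^{(j)}/P|\le C(d)A\,|I^{\ast}|^{-j}$ on $I^{\ast}\cap J$. Substituting into the higher chain rule for the logarithmic derivatives --- each $(\log|P|)^{(m)}$ is a fixed polynomial in the $P^{(j)}/P$ whose monomials $\prod_j(P^{(j)}/P)^{m_j}$ satisfy $\sum_j j\,m_j=m$ --- gives $|(\log|P|)^{(m)}|\le C(d,A)|I^{\ast}|^{-m}$ on $I^{\ast}\cap J$, hence $|R^{(j)}|=|z|\,|(\log|P|)^{(j+1)}|\le C(d,A)\,|z|\,|I^{\ast}|^{-(j+1)}$ there for $0\le j\le d-1$. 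Multiplying the factors in each monomial of $B_k$ and summing the boundedly many monomials gives $|B_k|\le C(d,A,z)|I^{\ast}|^{-k}$ on $I^{\ast}\cap J$ for $0\le k\le d$. Finally, splitting into the cases $\Re z\ge0$ and $\Re z<0$ and using $\mu\le|P|\le A\mu$ on $J$, one checks $\sup_{I^{\ast}\cap J}|Q_z|=\sup_{I^{\ast}\cap J}|P|^{\Re z}\le A^{|\Re z|}\sup_I|P|^{\Re z}=A^{|\Re z|}\sup_I|Q_z|$. Combining these with $I\subseteq I^{\ast}\cap J\subseteq I^{\ast}$ (so $|I^{\ast}\cap J|\le|I^{\ast}|$) and summing over $k=0,\dots,d$ yields the claimed estimate with a constant depending only on $d,A,z$.

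I expect the only real work to be the combinatorial bookkeeping: tracking the weighted homogeneity in the expansions of $B_k$ and of $(\log|P|)^{(m)}$ so that the powers of $|I^{\ast}|$ (and of $|z|$) line up exactly, and keeping track of how the constant accumulates in $A$ and in $|z|$. No idea beyond the first part of the lemma and the elementary chain/quotient-rule (Bell polynomial) expansions is needed.
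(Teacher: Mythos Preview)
Your argument is correct and follows essentially the same route as the paper. Both proofs handle the first part by affine normalization and equivalence of norms on a finite-dimensional space, and both handle the second part by expanding $Q_z^{(k)}$ as $Q_z$ times a combination of products of $P^{(j)}/P$ with total weight $k$, then invoking the first part together with $\mu\le|P|\le A\mu$ to bound each such ratio by $C(d,A)|I^{\ast}|^{-j}$ on $I^{\ast}\cap J$. The only difference is cosmetic: the paper writes the expansion directly as $Q_z^{(N)}=|P|^{z-N}\sum_{k_1+\cdots+k_N=N}C_{k_1,\dots,k_N}(z)\,P^{(k_1)}\cdots P^{(k_N)}$, whereas you pass through the logarithmic derivative $R=zP'/P$ and the recursion $B_{k+1}=B_k'+RB_k$; these are two bookkeeping schemes for the same Bell-polynomial identity, and your version makes the dependence of the constant on $|z|$ (at most $(1+|z|)^k$) more transparent.
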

\begin{proof}
By translation and scaling, we may assume $I=[0,1]$. Let $V$ be the space of all polynomials in $\bR$ with degree not greater than $d$. Then $V$ is a finite-dimensional vector space. It is clear that both sides of (\ref{sec2 esti of poly type func}) are norms on $V$. Hence the desired estimate follows.

By our assumption, $P$ has fixed sign on the interval $J$. For example, assume $P>0$. By induction, we can show that all derivatives of $Q_z$ have the following form:
\begin{equation*}
\frac{d^N}{dx^N}Q_z(x)
=
|P(x)|^{z-N}
\sum_{} C_{k_1,k_2,\cdots,k_N}(z)
P^{(k_1)}(x)P^{(k_2)}(x) \cdots P^{(k_N)}(x)
\end{equation*}
where the summation is taken over all integers $k_i\geq 0$ satisfying $k_1+k_2+\cdots+k_N=N$. For any interval $I\subseteq J$, we can apply the inequality (\ref{sec2 esti of poly type func}) to obtain the desired estimate.
\end{proof}

Now we give the operator van der Corput lemma due to Phong-Stein \cite{PS1994,PS1997} and Phong-Stein-Sturm \cite{PSS2001}.

The crucial notion of curved trapezoid is given as follows.
\begin{defn}
If $g$ and $h$ are two monotone functions on an interval $[a,b]$, then
$$
\Omega=\{(x,y)\mid a\leq x\leq b,~g(x)\leq y \leq h(x)\}
$$
is said to be a curved trapezoid.
\end{defn}

\begin{lemma}\label{operator van der Corput}
Let $T_{\lambda}$ be a {\rm(1+1)}-dimensional oscillatory integral operator as in {\rm(\ref{sec1 general oio})}, where $S$ is a real-valued polynomial in $\bR^2$ and $\varphi$ is supported in a curved trapezoid
$\Omega$. If the Hessian of the phase satisfies $\mu\leq |S''_{xy}(x,y)|\leq A\mu$ on $\Omega$ for two positive numbers $\mu,A>0$,
then there exists a constant $C=C(deg(S),A)$ such that
$$\|T_{\lambda}f\|_{L^2}
\leq C \left( \sum\limits_{k=0}^{2}
\mathop{\sup}\limits_{\Omega}
\big( \delta_{\Omega,h}(x) \big)^k |\partial_y^k\varphi(x,y)| \right)
|\lambda\mu|^{-1/2}\|f\|_{L^2},$$
where $\delta_{\Omega,h}(x)$ denotes the length of the vertical cross-section $I_{\Omega,h}(x)=\{y\mid (x,y)\in\Omega\}$.
\end{lemma}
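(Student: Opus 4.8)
The plan is to prove the operator van der Corput estimate by combining an almost-orthogonality (Cotlar–Stein) decomposition in the $x$-variable with the classical $TT^*$ argument. First I would dyadically decompose the curved trapezoid according to the vertical thickness $\delta_{\Omega,h}(x)$. Choose a partition of the $x$-interval $[a,b]$ into subintervals $I_j$ on each of which $\delta_{\Omega,h}(x)\approx 2^{-j}$ for some integer $j$; monotonicity of $g$ and $h$ guarantees that the cross-section length is itself essentially monotone, so each dyadic level contributes only boundedly many intervals $I_j$ and the pieces $T_\lambda^{(j)}$ (the restriction of $T_\lambda$ to $x\in I_j$) have disjoint $x$-supports. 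Hence $\|T_\lambda f\|_{L^2}^2\lesssim\sum_j\|T_\lambda^{(j)}f\|_{L^2}^2$, and it suffices to bound each $T_\lambda^{(j)}$ with a constant that, after summing the squares of the $2^{-j}$-weighted derivative bounds, reproduces the right-hand side. On each $I_j$ the vertical cross-section is an interval of length $\approx 2^{-j}$ about a curve, so after translating in $y$ we may treat the amplitude as supported where $|y|\lesssim 2^{-j}$, with $|\partial_y^k\varphi|$ controlled by $2^{jk}\sup_\Omega(\delta_{\Omega,h})^k|\partial_y^k\varphi|$.

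Next I would run the standard $TT^*$ computation on a single piece. The kernel of $T_\lambda^{(j)}(T_\lambda^{(j)})^*$ is
\begin{equation*}
K(y,y')=\int_{I_j} e^{i\lambda(S(x,y)-S(x,y'))}\varphi(x,y)\overline{\varphi(x,y')}\,dx,
\end{equation*}
and I would integrate by parts in $x$ using the vector field $(i\lambda(S_x(x,y)-S_x(x,y')))^{-1}\partial_x$. By the mean value theorem and the lower bound $|S''_{xy}|\ge\mu$ on $\Omega$, the phase derivative satisfies $|S_x(x,y)-S_x(x,y')|\gtrsim\mu|y-y'|$; each integration by parts then produces a gain of $(\lambda\mu|y-y'|)^{-1}$ at the cost of $x$-derivatives falling on the amplitude and on the reciprocal of the phase-gradient difference. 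Here Lemma \ref{sec2 poly type func lemma} is exactly what controls those extra $x$-derivatives: since $S$ is a polynomial of bounded degree, $\partial_x^m(S_x(x,y)-S_x(x,y'))$ is comparable to $|I_j|^{-m}$ times its supremum, so the upper bound $|S''_{xy}|\le A\mu$ together with the polynomial norm-equivalence keeps every such factor under control with a constant depending only on $\deg(S)$ and $A$. Performing this integration by parts twice yields
\begin{equation*}
|K(y,y')|\lesssim \Big(\sum_{k=0}^{2}\sup_\Omega(\delta_{\Omega,h})^k|\partial_y^k\varphi|\Big)^2\,|I_j|\,\min\!\big(1,(\lambda\mu|y-y'|)^{-2}\big),
\end{equation*}
and since $|y-y'|\lesssim 2^{-j}$ on the relevant range while $|I_j|\cdot 2^{-j}$ pairs with the $2^{jk}$ weights to reconstruct the stated quantity, the Schur test gives $\|T_\lambda^{(j)}(T_\lambda^{(j)})^*\|_{L^2\to L^2}\lesssim (\ldots)^2(\lambda\mu)^{-1}$, hence $\|T_\lambda^{(j)}\|_{L^2\to L^2}\lesssim(\ldots)(\lambda\mu)^{-1/2}$. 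Summing the $\ell^2$ contributions over the dyadic levels $j$ completes the estimate.

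The main obstacle I anticipate is the bookkeeping at the boundary of the curved trapezoid: the dyadic decomposition in $\delta_{\Omega,h}(x)$ must be carried out so that the amplitude on each piece genuinely satisfies the $2^{jk}$-weighted derivative bounds, and one must check that the monotonicity of $g,h$ forces the collection of dyadic $x$-intervals at each scale to have bounded overlap (so the almost-orthogonality is lossless, with no logarithmic factor). A secondary technical point is that after the $y$-translation straightening each cross-section the phase is no longer a polynomial in the new variables unless the translation is affine; one circumvents this by not literally changing variables but only using the translated picture to read off the support size, keeping all integrations by parts in the original polynomial phase so that Lemma \ref{sec2 poly type func lemma} still applies verbatim.
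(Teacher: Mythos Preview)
The paper does not prove this lemma; it merely states it and attributes it to Phong--Stein \cite{PS1994,PS1997} and Phong--Stein--Sturm \cite{PSS2001}. So there is no in-paper proof to compare against, only the standard literature argument. Measured against that, your proposal has two genuine problems.

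First, and most seriously, you have the $TT^*$ set-up inverted. What you write down as the kernel of $T_\lambda^{(j)}(T_\lambda^{(j)})^*$ is in fact the kernel of $(T_\lambda^{(j)})^*T_\lambda^{(j)}$: it is a function of $(y,y')$ obtained by integrating in $x$. You then integrate by parts in $x$, which forces $\partial_x$-derivatives onto the amplitude---yet your displayed kernel bound contains $\partial_y^k\varphi$, and the statement of the lemma involves only $\partial_y^k\varphi$ together with the vertical cross-section length $\delta_{\Omega,h}(x)$. These quantities cannot arise from integration by parts in $x$. The argument that matches the statement is the genuine $TT^*$ with kernel
\[
K(x,x')=\int e^{i\lambda(S(x,y)-S(x',y))}\varphi(x,y)\overline{\varphi(x',y)}\,dy,
\]
integration by parts in $y$ (so that $\partial_y^k\varphi$ and the $y$-interval length $\delta_{\Omega,h}$ appear naturally), and the mean-value/polynomial estimates of Lemma~\ref{sec2 poly type func lemma} applied to $\partial_yS(x,\cdot)-\partial_yS(x',\cdot)$. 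With that orientation the Schur test closes directly and no preliminary dyadic decomposition in $x$ is needed.

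Second, your dyadic decomposition rests on the assertion that monotonicity of $g$ and $h$ makes $\delta_{\Omega,h}(x)=h(x)-g(x)$ ``essentially monotone,'' so that each dyadic level contributes boundedly many $x$-intervals. This is false: take $g(x)=x$ and $h(x)=x+1+\tfrac12\sin x$ on a long interval; both are strictly increasing, but $h-g=1+\tfrac12\sin x$ oscillates and its dyadic level sets have arbitrarily many components. Thus the almost-orthogonality step as written does not go through, and even if it did, the final ``$\ell^2$ sum over $j$'' would not converge since your per-piece bound is $j$-independent. The repair is simply to drop this decomposition and run the $TT^*$ argument globally on $\Omega$; the curved-trapezoid hypothesis is used only to guarantee that the $y$-slice $\{y:(x,y),(x',y)\in\Omega\}$ is a single interval, which is what makes the boundary terms from integration by parts in $y$ manageable.
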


The following simple version of almost orthogonality principle will be frequently used in this paper; see Phong-Stein-Sturm \cite{PSS2001} for its proof. For a general set $A$, the notation $\chi_{A}$ denotes the characteristic function of $A$.
\begin{lemma}\label{sec2 almost ortho lemma}
Let $K$ be a Lebesgue measurable function in $\bR^{n_X}\times \bR^{n_Y}$ and $m_{d}$ the Lebesgue measure on $\bR^d$. Assume that there are measurable sets $A_i\subseteq \bR^{n_X}$ and $B_i\subseteq \bR^{n_Y}$ such that $m_{n_X}(A_i\cap A_j)= 0$ and $m_{n_Y}(B_i\cap B_j)= 0$ for $|i-j|\geq N_0$. Let $K_i(x,y)=K(x,y)\chi_{A_i}(x)\chi_{B_i}(y)$ and $T$ (respectively, $T_i$) be the integral operator associated with the kernel $K$ (respectively, $K_i$). If $T=\sum_{i} T_i$, then for all $1\leq p \leq \infty$ we have
\begin{equation*}
\|T\|_p \leq N_0 \sup_{i} \|T_i\|_p,
\end{equation*}
where $\|T\|_p$ and $\|T_i\|_p$ denote the operator norms of $T$ and $T_i$ respectively, as operators from $L^p(\bR^{n_Y})$ into $L^p(\bR^{n_X})$.
\end{lemma}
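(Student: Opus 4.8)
The plan is to reduce to the case of operators with pairwise disjoint domains and ranges by splitting the index set into $N_0$ arithmetic progressions. For each residue $r\in\{0,1,\dots,N_0-1\}$ put $U_r=\sum_{i\equiv r\,(\mathrm{mod}\,N_0)}T_i$, so that $T=\sum_{r=0}^{N_0-1}U_r$. Any two distinct indices lying in a fixed residue class differ by at least $N_0$, so the hypothesis gives $m_{n_X}(A_i\cap A_j)=0$ and $m_{n_Y}(B_i\cap B_j)=0$ for every such pair; since the index set is countable, it follows that within each class the sets $A_i$ are pairwise essentially disjoint, and likewise the sets $B_i$.

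The core step is then to show $\|U_r\|_p\le\sup_i\|T_i\|_p$ for each fixed $r$. Since the kernel of $T_i$ is $K(x,y)\chi_{A_i}(x)\chi_{B_i}(y)$, we have $T_if=T_i(f\chi_{B_i})$ and $\supp(T_if)\subseteq A_i$. For $1\le p<\infty$, essential disjointness of the $A_i$ within the class makes all cross terms vanish, so $\|U_rf\|_p^p=\sum_{i\equiv r}\|T_if\|_p^p\le(\sup_i\|T_i\|_p)^p\sum_{i\equiv r}\|f\chi_{B_i}\|_p^p$; essential disjointness of the $B_i$ then gives $\sum_{i\equiv r}\|f\chi_{B_i}\|_p^p\le\|f\|_p^p$, whence $\|U_rf\|_p\le\sup_i\|T_i\|_p\,\|f\|_p$. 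The case $p=\infty$ is handled identically, with sums replaced by essential suprema. Finally, the triangle inequality over the $N_0$ progressions yields $\|T\|_p\le\sum_{r=0}^{N_0-1}\|U_r\|_p\le N_0\sup_i\|T_i\|_p$.

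The argument is essentially bookkeeping, and the only point deserving care is the elementary fact that disjointly supported pieces combine in $\ell^p$, which must be stated modulo null sets and, when $p=\infty$, phrased via essential suprema rather than sums. Because the identity $T=\sum_iT_i$ is part of the hypothesis, no separate convergence discussion is required; if one wishes to be scrupulous, one may instead work with the partial sums $\sum_{|i|\le M}T_i$ and pass the uniform bound to the limit. I do not anticipate any genuine obstacle here.
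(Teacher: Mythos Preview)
Your argument is correct and is the standard one: partition the index set into $N_0$ residue classes modulo $N_0$, exploit the essential disjointness of the $A_i$ and $B_i$ within each class to combine the pieces in $\ell^p$, and then sum over the classes by the triangle inequality. The paper itself does not supply a proof of this lemma; it simply cites Phong--Stein--Sturm \cite{PSS2001}, where exactly this type of reasoning appears, so there is nothing further to compare.
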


As a useful interpolation technique, we also need the following interpolation with change of power weights. An earlier version of this lemma appeared in Pan, Sampson and Szeptycki \cite{pansampson}; see also \cite{ShiYan, Shi}.
\begin{lemma}\label{sec2 interp change of meas}
Let $T$ be a sublinear operator mapping simple functions in $\bR^{n_Y}$, defined with respect to Lebesgue measure, into measurable functions in $\bR^{n_X}$. Assume that there are constants $A,B>0$ and $a\neq -\frac{1}{2n_X}$ such that, for all simple functions in
$\bR^{n_Y}$,
\begin{enumerate}
\item[{\rm (i)}]  $\|Tf\|_{L^{\infty}(dx)}\leq A \|f\|_{L^1(dy)}${\rm ;}
\item[{\rm (ii)}] $\||x|^aTf\|_{L^2 (dx)}\leq B \|f\|_{L^2(dy)}$.
\end{enumerate}
Then for any $\theta\in (0,1)$, there exists a constant $C=C(a,n_X,\theta)$ such that
\[
\||x|^{a\theta-(1-\theta)n_X}Tf\|_{L^p(dx)}\leq CA^{1-\theta}B^{\theta}\|f\|_{L^p(dy)},
~~~\frac1{p}=\frac{\theta}{2}+1-\theta.
\]
\end{lemma}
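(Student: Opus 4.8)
The plan is to derive the estimate as an instance of real (Marcinkiewicz‑type) interpolation with change of power weights, the only substantive point being the identification of the target interpolation space with a weighted Lebesgue space.

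First I would recast the hypotheses as bounded maps into weighted Lebesgue spaces over $\mathbb{R}^{n_X}$: by (i), $T\colon L^1(dy)\to L^\infty(dx)$ with norm $\le A$, and by (ii), $T\colon L^2(dy)\to L^2(|x|^{2a}\,dx)$ with norm $\le B$. Since $T$ is sublinear, the $K$-functional method of real interpolation applies to the Banach couples $\big(L^1(dy),L^2(dy)\big)$ and $\big(L^\infty(dx),L^2(|x|^{2a}dx)\big)$; interpolating with parameter $\theta\in(0,1)$ and fine index $p$, where $\tfrac1p=\tfrac{1-\theta}{1}+\tfrac\theta2=\tfrac\theta2+1-\theta$, yields
\[
T\colon\ \big(L^1(dy),L^2(dy)\big)_{\theta,p}\ \longrightarrow\ \big(L^\infty(dx),L^2(|x|^{2a}dx)\big)_{\theta,p}
\]
with operator norm $\lesssim A^{1-\theta}B^{\theta}$ (after extending from simple functions by density).

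It then remains to identify the two spaces. The source side carries no weight, so $\big(L^1(dy),L^2(dy)\big)_{\theta,p}=L^{p,p}(dy)=L^p(dy)$, which is exactly the right‑hand side of the stated inequality. On the target side I would compute $K\big(t,g;L^\infty(dx),L^2(|x|^{2a}dx)\big)$ by the usual truncation decomposition of $g$, expressing it through the distribution function of $|g|$ with respect to the measure $|x|^{2a}\,dx$; since a ball $B(0,R)$ has $|x|^{2a}\,dx$-measure $\approx R^{2a+n_X}$, a direct computation (easily checked first on $g=\chi_{B(0,R)}$) shows that the interpolation quasi‑norm is equivalent to $\big\||x|^{a\theta-(1-\theta)n_X}g\big\|_{L^p(dx)}$ — the power $a\theta-(1-\theta)n_X$ being precisely the one that makes the relevant powers of the scaling variable cancel. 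Substituting both identifications into the displayed mapping property gives the claim, with $C=C(a,n_X,\theta)$ absorbing the equivalence constants. The hypothesis $a\ne-\tfrac1{2n_X}$ enters exactly here: it excludes the exceptional value of $a$ at which the power weight $|x|^{2a}$ is critical — at which $|x|^{2a}\,dx$ no longer assigns finite power‑law mass to balls (or to their complements) and the $K$-functional computation degenerates logarithmically; for every other $a$ the identification is valid.

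The main obstacle is precisely this identification of $\big(L^\infty(dx),L^2(|x|^{2a}dx)\big)_{\theta,p}$ with $L^p\big(|x|^{(a\theta-(1-\theta)n_X)p}dx\big)$; the interpolation step itself is formal. Should one prefer to avoid the abstract machinery, the same bound can be produced by hand: decompose $\mathbb{R}^{n_X}$ into dyadic annuli $\{|x|\approx 2^j\}$ and $f$ into level sets $f_k=f\chi_{\{|f|\approx 2^k\}}$, and on each (finite‑measure) annulus bound $Tf_k$ via (i) composed with the embedding $L^\infty\hookrightarrow L^p$ when $\|f_k\|_{L^1}$ is small and via (ii) composed with $L^2\hookrightarrow L^p$ when it is large, then sum. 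The resulting double series in $j$ and $k$ is geometric and converges exactly because $a\ne-\tfrac1{2n_X}$; the delicate point is to organize it so as not to lose in passing from $\sum_k\|f_k\|_{L^p}$ back to $\|f\|_{L^p}$, which is arranged by using that the annuli carrying the bulk of $Tf_k$ for different $k$ are essentially disjoint.
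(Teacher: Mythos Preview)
Your central identification is wrong. Because $L^{\infty}(dx)=L^{\infty}(\mu)$ for any measure $\mu$ absolutely continuous with respect to Lebesgue measure, one has
\[
\big(L^{\infty}(dx),\,L^{2}(|x|^{2a}dx)\big)_{\theta,p}
=\big(L^{\infty}(\mu),\,L^{2}(\mu)\big)_{\theta,p}
=L^{2/\theta,\,p}(\mu),\qquad d\mu=|x|^{2a}\,dx,
\]
which is a Lorentz space with first index $2/\theta\neq p$ (for $\theta\in(0,1)$), \emph{not} a weighted $L^{p}(dx)$. Testing on characteristic functions of balls centred at the origin cannot detect the difference, since both norms scale the same way under dilations; on a generic set the norms are not comparable. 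So the step you flag as ``the main obstacle'' in fact fails, and the formal interpolation step lands in the wrong space. Your fallback dyadic argument also has a gap: for a general sublinear $T$ there is no mechanism forcing $Tf_{k}$ to concentrate on essentially disjoint annuli in $x$, so the reassembly $\sum_{k}\|f_{k}\|_{L^{p}}\to\|f\|_{L^{p}}$ cannot be arranged as you describe.

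The paper avoids the problematic couple $(L^{\infty},L^{2}(w))$ altogether. It sets $Wf(x)=|x|^{b}\,Tf(x)$ with $b=2a+n_{X}$ and works with the single target measure $d\mu=|x|^{-2a-2n_{X}}\,dx$. Hypothesis (ii) becomes $W\colon L^{2}(dy)\to L^{2}(d\mu)$ on the nose, while hypothesis (i) combined with the elementary fact that $|x|^{b}\in L^{1,\infty}(d\mu)$ (this is exactly where $b=2a+n_{X}\neq 0$ enters) gives $W\colon L^{1}(dy)\to L^{1,\infty}(d\mu)$. Now ordinary Marcinkiewicz interpolation --- with one and the same measure on the target --- yields $W\colon L^{p}(dy)\to L^{p}(d\mu)$, which unpacks to the stated weighted inequality. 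The trick is to trade the awkward $L^{\infty}$ endpoint for a weak-$L^{1}$ endpoint by absorbing a power of $|x|$ into the operator; after that no identification of a mixed-weight interpolation space is needed.
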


\begin{proof}
Define a measure $d\mu=|x|^{c}dx$ on $\bR^{n_X}$ and an operator $Wf(x)=|x|^bTf(x)$, where $b,c\in\bR$ are to be determined. Here we need a simple fact that $|x|^{b}$ belongs to $L^{1,\infty}(|x|^{c}dx)$ in $\bR^{n_X}$ if and only if $b+c=-n_X$ and $b\neq 0$. Now we first consider $b>0$. For any $\lambda>0$, $|x|^b>\lambda$ is equivalent to $|x|>\lambda^{1/b}$. Hence
\begin{eqnarray*}
d\mu(\{x\in\bR^{n_X}: |x|^b>\lambda\})
=
\int_{|x|>\lambda^{1/b}} |x|^{-n_X-b}dx
=
C(n_X,b)\lambda^{-1}.
\end{eqnarray*}
Thus our claim is true for $b>0$. Similarly, we can show the claim for $b<0$.

With the above result, we take $b=2a+n_X$ and $c=-2a-2n_X$. By Assumptions (i) and (ii), $W$ is bounded from $L^1(\bR^{n_Y};dy)$ and $L^2(\bR^{n_Y};dy)$ into $L^{1,\infty}(\bR^{n_X};d\mu)$ and $L^2(\bR^{n_X};d\mu)$, respectively. By the Marcinkiewicz interpolation theorem, we see that for any $0<\theta<1$, there exists a constant $C=C(a,n_X,\theta)$ such that
\begin{equation*}
\|Wf\|_{L^p(|x|^cdx)}
\leq
C A^{1-\theta} B^{\theta} \|f\|_{L^p(dy)}
\end{equation*}
with $\frac{1}{p}=\theta/2+1-\theta=1-\theta/2$. In other words,
$$
\||x|^{a\theta-(1-\theta)n_X}Tf\|_{L^p(dx)}
\leq
C A^{1-\theta} B^{\theta} \|f\|_{L^p(dy)}
$$
for $\frac{1}{p}=1-\theta/2$. The proof of the lemma is complete.
\end{proof}

\section{Sharp $L^p$ estimates}
In this section, we shall establish $L^p$ estimates for oscillatory integral operators with homogeneous polynomial phases satisfying the rank one condition. The corresponding endpoint $L^p$ estimates will be given in Section 4. Now we first introduce the concept of rank one condition due to Greenleaf, Pramanik and Tang \cite{GPT}.

\begin{defn}
Let $S$ be a homogeneous polynomial in $\bR^{n_X}\times\bR^{n_Y}$ with real coefficients. We say that $S$ satisfies the rank one condition if ${\bf{rank}}( {\rm Hess} (S)(x,y))\geq 1$ away from the origin, i.e., the system of equations $\partial_{x_i}\partial_{y_j}S(x,y)=0$ does not have a solution $(x,y)\in \bR^{n_X}\times\bR^{n_Y}\backslash \{(0,0)\}$.
\end{defn}

\noindent Under the rank one condition, we can state our main result in this section as follows.
\begin{theorem}\label{sec3 main thm 1}
Assume $S$ is a homogeneous polynomial in $\bR^{n_X}\times\bR^{n_Y}$ with real coefficients and degree $d>n_X+n_Y$. Let $T_{\lambda}$ be the oscillatory integral operator as in {\rm(\ref{sec1 general oio})}. If $S$ satisfies the rank one condition, then for $p$ in the following range
\begin{equation}\label{sec3 range lp boundedness}
\frac{d-n_Y+n_X}{d-n_Y}
<p<
\frac{d-n_X+n_Y}{n_Y},
\end{equation}
there exists a constant $C=C(S,\varphi,p)$ such that
\begin{equation}\label{sec3 main Lp esti}
\|T_{\lambda}f\|_{L^p}\leq C|\lambda|^{-\gamma}\|f\|_{L^p},~~~
\gamma=\frac{n_X}{d}\cdot\frac{1}{p}
+\frac{n_Y}{d}\cdot\frac{1}{p'},
\end{equation}
where $p'$ is the conjugate exponent of $p$, i.e., $1/p'=1-1/p$. Moreover, this estimate is sharp provided that the cut-off $\varphi$ does not vanish near the origin.
\end{theorem}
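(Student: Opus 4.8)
The plan is to reduce the $n_X+n_Y$ dimensional operator to a family of $(1+1)$-dimensional operators by a dyadic decomposition adapted to the rank one condition, apply Lemma \ref{operator van der Corput} on each piece, and then reassemble via Lemma \ref{sec2 almost ortho lemma} together with the interpolation device of Lemma \ref{sec2 interp change of meas}. First I would use the homogeneity of $S$ and the compact support of $\varphi$ to perform a Littlewood–Paley type decomposition in the ``radial'' variable: split $\mathbb{R}^{n_X}\times\mathbb{R}^{n_Y}$ into dyadic shells where $|(x,y)|\approx 2^{-j}$, $j\ge 0$, and rescale each shell back to the unit annulus. On the unit annulus the rank one condition says the Hessian $\mathrm{Hess}(S)$ never vanishes, so a compactness argument produces a finite cover on which some fixed mixed derivative $\partial_{x_i}\partial_{y_k}S$ is bounded below; by a further (linear) change of variables one may assume $i$ and $k$ are fixed and that one is in a curved-trapezoid situation in the remaining two coordinates $(x_i,y_k)$, with the other $n_X+n_Y-2$ variables as parameters. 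This is where the rank one condition is doing all the work, exactly as in Greenleaf–Pramanik–Tang \cite{GPT}.

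Next I would track the scaling. Under $x\mapsto 2^{-j}x$, $y\mapsto 2^{-j}y$, the phase $S$ being homogeneous of degree $d$ picks up a factor $2^{-jd}$, so on the $j$-th shell the effective large parameter is $|\lambda|2^{-jd}$, the $y$-integration contributes $2^{-jn_Y}$ and the $L^p(dx)$ norm contributes $2^{jn_X/p}$. On each two-dimensional slice, applying Lemma \ref{operator van der Corput} (after using Lemma \ref{sec2 poly type func lemma} to control the derivatives of the rescaled cutoff by its sup, uniformly in $j$ and in the parameters) gives an $L^2\to L^2$ bound of order $(|\lambda|2^{-jd})^{-1/2}$ on the slice; integrating the slices in the parameter variables and combining with the scaling factors yields, for each $j$, a weighted $L^2$ estimate of the form $\||x|^a T_\lambda^{(j)} f\|_{L^2}\lesssim 2^{-j\alpha}|\lambda|^{-1/2}\|f\|_{L^2}$ for an appropriate exponent $a$ and gain $\alpha=\alpha(d,n_X,n_Y)$. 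In parallel one has the trivial $L^1\to L^\infty$ bound $\|T_\lambda^{(j)}f\|_\infty\lesssim 2^{j\beta}\|f\|_1$ coming just from the measure of the support. Feeding these two endpoint estimates into Lemma \ref{sec2 interp change of meas} produces, for the interpolated exponent $1/p=1-\theta/2$, a bound $\||x|^{a\theta-(1-\theta)n_X}T_\lambda^{(j)}f\|_{L^p}\lesssim 2^{j\kappa(\theta)}|\lambda|^{-\theta/2}\|f\|_{L^p}$, and the whole point of choosing the parameters is that the exponent $\gamma$ in \eqref{sec3 main Lp esti} is precisely $\theta/2$ at the matching $p$, while $\kappa(\theta)<0$ exactly when $p$ lies in the open range \eqref{sec3 range lp boundedness}; summing the geometric series in $j$ then finishes the boundedness half. (The endpoints of the range are where the geometric series diverges logarithmically, which is why they are excluded here and treated separately in Section 4.)

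The main obstacle, and the step I expect to require the most care, is the bookkeeping in the two-dimensional reduction: making the curved-trapezoid decomposition and the lower bound $\mu\le|\partial_{x_i}\partial_{y_k}S|\le A\mu$ \emph{uniform} in the $2^{n_X+n_Y-2}$ parameter variables and in the dyadic index $j$, so that the constant $C(\deg S,A)$ in Lemma \ref{operator van der Corput} does not blow up, and then integrating the one-dimensional $L^2$ bounds over the parameters without losing the sharp power of $|\lambda|$. One must also verify that the weight exponent $a$ forced by the scaling satisfies $a\neq-\tfrac{1}{2n_X}$ so that Lemma \ref{sec2 interp change of meas} applies, and handle the finitely many coordinate choices $(i,k)$ by Lemma \ref{sec2 almost ortho lemma}. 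Finally, sharpness of the exponent $\gamma$ is obtained by the standard dual test-function / Knapp-type construction: localizing $f$ to a shrinking box around the origin where the phase is essentially constant, computing both sides, and optimizing the box dimensions against $|\lambda|$, together with a scaling (dilation) argument exploiting the homogeneity of $S$ and the non-vanishing of $\varphi$ at the origin to see that no better decay is possible.
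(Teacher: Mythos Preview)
Your overall strategy---dyadic decomposition in $|(x,y)|$, a compactness argument on the unit sphere exploiting the rank one condition, operator van der Corput on $(1+1)$-dimensional slices, and interpolation via Lemma~\ref{sec2 interp change of meas}---is the right framework and is close to the paper's approach. However, two concrete errors in the bookkeeping would derail the argument as written. First, the claim that ``the exponent $\gamma$ in \eqref{sec3 main Lp esti} is precisely $\theta/2$ at the matching $p$'' is false except at the single endpoint $p=(d-n_Y+n_X)/(d-n_Y)$; for instance at $p=2$ one has $\gamma=(n_X+n_Y)/(2d)<1/2=\theta/2$, so the decay $|\lambda|^{-\theta/2}$ coming from the per-piece oscillation estimate is \emph{too strong} to survive summation. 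Second, and related, you invoke only the oscillation bound $\|T_\lambda^{(j)}\|_2\lesssim(|\lambda|2^{-jd})^{-1/2}2^{-j(n_X+n_Y)/2}$ and never the companion size bound $\|T_\lambda^{(j)}\|_2\lesssim 2^{-j(n_X+n_Y)/2}$; with oscillation alone, after interpolation the exponent of $2^{-j}$ is $d(\gamma-1/p')<0$ for every $p$ strictly inside the range, and the geometric series diverges at the small-scale end.

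The paper repairs this by summing \emph{before} interpolating. One inserts the damping factor $|x|^z$ and computes $\|W^z_\alpha\|_2\le\sum_k\|W^z_{\alpha,k}\|_2$ using the oscillation estimate where $|\lambda|2^{kd}>1$ and the size estimate where $|\lambda|2^{kd}\le1$; this yields the damping estimate $\|W^z_\alpha\|_2\lesssim|\lambda|^{-\sigma/d-(n_X+n_Y)/(2d)}$ for $0<\sigma=\Re(z)<\sigma_0=(d-n_X-n_Y)/2$---note the decay depends on $\sigma$, not a flat $|\lambda|^{-1/2}$. Only then is Lemma~\ref{sec2 interp change of meas} applied, with $\sigma$ chosen so the resulting weight vanishes; the computation at the end of Case~2 shows the interpolated exponent is exactly $\gamma$. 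The paper also makes a three-way angular split of $S^{n_X+n_Y-1}$ into regions near the $X$-space, near the $Y$-space, and away from both: away from both one has $|x|\approx|y|\approx2^k$, the dyadic pieces are genuinely almost orthogonal in both variables, and Lemma~\ref{sec2 almost ortho lemma} gives the result directly without damping; near the $X$-space one damps with $|x|^z$, and near the $Y$-space with $|y|^z$ (the latter yielding a strong $L^1\to L^1$ bound via Fubini since $|y|\gtrsim|x|$ there). The range $p\ge2$ is then obtained by duality. Your outline does not distinguish these regimes, and in particular the single weight $|x|^a$ together with the incorrect identification $\gamma=\theta/2$ means the numerics, as stated, do not close.
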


\begin{proof}
Our proof will be divided into two steps.\\

\textbf{Step 1}. Sharpness of the decay rate.\\

Assume $\varphi(0,0)\neq 0$ and $|\lambda|$ is sufficiently large. Let $f(y)=\chi_{\{|y|\leq \epsilon_0 |\lambda|^{-1/d}\}}$ for some small $\epsilon_0>0$. For $x\in\bR^{n_X}$ near the origin, $|x|\leq \epsilon_0 |\lambda|^{-1/d}$, we obtain $|T_{\lambda}f(x)|\gtrsim |\lambda|^{-n_Y/d}$. Hence
\begin{eqnarray*}
\|T\|_p
\geq
\|T_{\lambda}f\|_{L^p}/\|f\|_{L^p}
\gtrsim  |\lambda|^{-n_Y/d}
|\lambda|^{-\frac{n_X}{dp}}/
|\lambda|^{-\frac{n_Y}{dp}}=|\lambda|^{-\gamma}.
\end{eqnarray*}

\textbf{Step 2.} Proof of the optimal decay estimate.

For $r>0$, we use $B_r((x,y))$ to denote the ball in $\bR^{n_X+n_Y}$ with radius $r$ and center $(x,y)$. Let $S^{n_X+n_Y-1}$ be the unit sphere centered at the origin in $\bR^{n_X+n_Y-1}$. In the following argument, we need a partition of unity on $S^{n_X+n_Y-1}$. For our purpose, we shall first give an appropriate open cover $\{\mathcal{O}_{\alpha}\}$ of $S^{n_X+n_Y-1}$.

We first choose a sufficiently small $r>0$ such that for each point $x^{\ast}:=(x,0)\in S^{n_X+n_Y-1}$, there exists a pair of indices $(i,j)$, $1\leq i \leq n_X$, $1\leq j \leq n_Y$, such that the mixed derivative $\partial_{x_i}\partial_{x_j}S(x,y)$ does not change sign and its absolute value is comparable to a positive constant for all $(x,y)\in B_r(x^{\ast})\cap S^{n_X+n_Y-1}$. Since $S^{n_X-1}$ is compact, we can select finitely many points $x_1,x_2,\cdots,x_M\in S^{n_X-1}$ such that $\cup_iB_r(x_i^{\ast})\supseteq S^{n_X-1}\times \{0_{n_Y}\}$. Here the notation $0_m$ denotes the origin in $\bR^m$. In this way, if $r>0$ is small enough, we can choose $y_1,\cdots,y_N\in S^{n_Y-1}$ such that the union $\cup_i B_r(y_i)$ covers $S^{n_Y-1}$ and some mixed derivative $\partial_{x_s}\partial_{y_t}S$ does not vanish on each given ball $B_r(y_i^{\ast})$.

Similarly, there exists a small number $\rho>0$ and finitely many points $w_1,w_2,\cdots,w_K\in S^{n_X+n_Y-1}$ such that the following three properties hold:

\textbf{(i)} The union of $B_{\rho}(w_k)\cap S^{n_X+n_Y-1}$ covers the complement of $\cup_{i,j}\big(B_r(x_i^{\ast})\cup B_r(y_j^{\ast})\big)\cap S^{n_X+n_Y-1}$ relative to $S^{n_X+n_Y-1}$, i.e.,
\begin{equation*}
\bigcup_{k=1}^K \Big(B_{\rho}(w_k)\cap S^{n_X+n_Y-1}\Big)
\supseteq
\Big(
\cup_{i,j}\big(B_r(x_i^{\ast})\cup B_r(y_j^{\ast})\big)
\Big)^c\cap S^{n_X+n_Y-1};
\end{equation*}

\textbf{(ii)} Each $B_{\rho}(w_k)$ does not intersect both $B_{r/2}(x_i^{\ast})\cap S^{n_X+n_Y-1}$ and
$B_{r/2}(y_j^{\ast})\cap S^{n_X+n_Y-1}$ for all $i$ and $j$, i.e.,
\begin{equation*}
B_{\rho}(w_k)\cap B_{r/2}(x_i^{\ast})\cap S^{n_X+n_Y-1}=\emptyset,~~~
B_{\rho}(w_k)\cap B_{r/2}(y_j^{\ast})\cap S^{n_X+n_Y-1}=\emptyset;
\end{equation*}

\textbf{(iii)} For each $1\leq k \leq K$, there exists a pair of indices $(s,t)$ such that $\partial_{x_s}\partial_{y_t}S$ has fixed sign on $B_{\rho}(w_k)$ and its absolute value is bounded from both above and below by positive constants.\\

Let $\{\mathcal{O}_{\alpha}\}$ be the open cover consisting of $B_{r}(x_i^{\ast})\cap S^{n_X+n_Y-1}$, $B_r(y_j^{\ast})\cap S^{n_X+n_Y-1}$ and $B_{\rho}(w_k)\cap S^{n_X+n_Y-1}$. As discussed above, the union of $\mathcal{O}_{\alpha}$ covers the sphere $ S^{n_X+n_Y-1}$. Corresponding to this open cover, we can now construct a partition of unity $\{\Psi_{\alpha}\}$ such that each $\Psi_{\alpha}$ is homogeneous of degree zero, $\Psi_{\alpha}|_{S^{n_X+n_Y-1}}\in C_0^{\infty}(\mathcal{O}_{\alpha})$ and $\sum_{\alpha}\Psi_{\alpha}(x)=1$ for all $x\in S^{n_X+n_Y-1}$.

For each $k\in \mathbb{Z}$ and $\alpha$, we define $T_{\lambda,\alpha,k}$ as $T_{\lambda}$ in (\ref{sec1 general oio}) by insertion of $\Psi_{\alpha}(x,y)\Phi(x/2^k,y/2^k)$ into the cut-off of $T_{\lambda}$, i.e.,
\begin{equation}\label{sec3 def of Tkalpha}
T_{\lambda,\alpha,k}f(x)=\int_{\bR^{n_Y}} e^{i\lambda S(x,y)}
\Psi_{\alpha}(x,y)
\Phi\left(\frac{x}{2^k},\frac{y}{2^k}\right)
\varphi(x,y)f(d)dy,
\end{equation}
where $\Phi\in C_0^{\infty}$ is supported in the annulus $1/2 \leq |(x,y)|\leq 2$ such that $\sum_{k}\Phi(x/2^k,y/2^k)=1$ for all $(x,y)$ away from the origin.

In what follows, we shall establish the sharp $L^p$ estimate in the theorem. It is more convenient to divide our argument into three cases.\\

\textbf{Case 1} $\supp(\Psi_{\alpha})\cap S^{n_X+n_Y-1}\subseteq B_{\rho}(w_i)$ for some $i$.\\

In this case, $\Psi_{\alpha}|_{S^{n_X+n_Y-1}}$ is supported in an open subset of $S^{n_X+n_Y-1}$ (with subset topology) which does not intersect both the $X-$space and the $Y-$space. Let $T_{\lambda,\alpha}=\sum_kT_{\lambda,\alpha,k}$. Then $T_{\lambda,\alpha}$ is supported in a cone, with vertex at the origin, which does not intersect both the
$X-$space and the $Y-$space away from the origin. Hence for $(x,y)$ in the support of $T_{\lambda,\alpha,k}$, we have $|x|\approx |y|\approx 2^k$. It follows immediately that if $|k-l|\geq A_0$ for some large positive integer $A_0$, then
\begin{equation*}
\mathbb{P}_X(\supp(T_{\lambda,\alpha,k})
\cap
\supp(T_{\lambda,\alpha,l}))=\emptyset,~~~
\mathbb{P}_Y(\supp(T_{\lambda,\alpha,k})
\cap
\supp(T_{\lambda,\alpha,l}))=\emptyset,
\end{equation*}
where $\mathbb{P}_X$ and $\mathbb{P}_Y$ are the projections from $\bR^{n_X+n_Y}$ onto the $X-$space $\bR^{n_X}$ and the $Y-$space $\bR^{n_Y}$, respectively.

By the almost orthogonality principle in Lemma \ref{sec2 almost ortho lemma}, we have $\|T_{\lambda,\alpha}\|_p\leq A_0\sup_k\|T_{\lambda,\alpha,k}\|_p$ for all $1\leq p \leq \infty$. To establish our desired estimate, it suffices to prove that each $T_{\lambda,\alpha,k}$ satisfies the $L^p$ estimate (\ref{sec3 main Lp esti}).

By our assumption, there exist indices $i_{\alpha}$ and $j_{\alpha}$ such that $\partial_{x_{i_{\alpha}}} \partial_{y_{j_{\alpha}}}S(x,y)$ does not change its sign on the support of $T_{\lambda,\alpha,k}$ and its absolute value is bounded from above and below by positive constants. Hence we can apply Lemma \ref{operator van der Corput} with respect to the variables $x_{i_{\alpha}}$ and $y_{j_{\alpha}}$, letting other variables fixed temporarily, and make use of the Schur test, with respect to other variables, to obtain
\begin{eqnarray}\label{sec3 main L2 esti 1}
\|T_{\lambda,\alpha,k}\|_2
&\leq&
C 2^{k(n_X+n_Y-2)/2}\Big(|\lambda|2^{k(d-2)}\Big)^{-1/2}
\nonumber\\
&=&
C
2^{k(n_X+n_Y)/2}\Big(|\lambda|2^{kd}\Big)^{-1/2}.
\end{eqnarray}
On the other hand, since $|x|\approx |y|\approx 2^k$ on the support of $T_{\lambda,\alpha,k}$, it is clear that
\begin{equation*}
\|T_{\lambda,\alpha,k}\|_1
\leq
C
2^{kn_X}.
\end{equation*}
Let $\theta=\frac{2n_X}{d+n_X-n_Y}\in (0,1)$. With this $\theta$, we use the Riesz-Th\"orin interpolation theorem to obtain
\begin{equation}\label{sec3 main esti 1a}
\|T_{\lambda,\alpha,k}\|_p
\leq
C
|\lambda|^{-\frac{n_X}{d+n_X-n_Y}}
\end{equation}
where
\begin{equation*}
\frac1p=\frac{\theta}{2}+1-\theta
=1-\frac{n_X}{d+n_X-n_Y}=\frac{d-n_Y}{d+n_X-n_Y}.
\end{equation*}
By a duality argument, we also have
\begin{equation}\label{sec3 main esti 1b}
\|T_{\lambda,\alpha,k}\|_q
\leq
C
|\lambda|^{-\frac{n_Y}{d-n_X+n_Y}},
~~~q=\frac{d-n_X+n_Y}{n_Y}.
\end{equation}
In fact, $T_{\lambda,\alpha,k}$ satisfies the $L^{\infty}$ estimate $\|T_{\lambda,\alpha,k}\|_{\infty}
\leq
C
2^{kn_Y}$. Interpolation this with the $L^2$ inequality (\ref{sec3 main L2 esti 1}) gives the above $L^q$ estimate.

By interpolation, we see that each $T_{\lambda,\alpha,k}$ satisfies the estimate (\ref{sec3 main Lp esti}) uniformly. By the almost orthogonality described as above, we see that $T_{\lambda,\alpha}$ also satisfies the desired estimate.\\

\textbf{Case 2} $\supp(\Psi_{\alpha})\cap S^{n_X+n_Y-1}\subseteq B_{\rho}(x_i^{\ast})$ for some $i$.\\

In this case, we have $|x|\gtrsim |y|$ and $|x|\approx 2^k$ in the support of $T_{\lambda,\alpha,k}$. The almost orthogonality in Case 1 is not true now. By insertion of the damping factor $|x|^z$, we shall consider the following damped operator $W_{\lambda,\alpha,k}^z$ associated with $T_{\lambda,\alpha,k}$,
\begin{equation}\label{sec3 def of Wkalpha z}
W_{\lambda,\alpha,k}^zf(x)=\int_{\bR^{n_Y}} e^{i\lambda S(x,y)}
\Psi_{\alpha}(x,y)
\Phi\left(\frac{x}{2^k},\frac{y}{2^k}\right)
|x|^z\varphi(x,y)f(d)dy.
\end{equation}
Let $W_{\lambda,\alpha}^z=\sum_kW_{\lambda,\alpha,k}^z$. With this definition, it is easy to see that $W_{\lambda,\alpha}^z$ is bounded from $L^1(\bR^{n_Y})$ into $L^{1,\infty}(\bR^{n_X})$ provided that $z$ has real part $\Re(z)=-n_X$.

In what follows, our main purpose is to establish $L^2$ damping estimates for $W_{\lambda,\alpha}^z$. Let $\sigma_0=\frac{d-n_X-n_Y}{2}$. One will see that $\sigma_0$ is the critical exponent in the following damping estimates:
\begin{equation}\label{sec3 damping esti 2}
\|W_{\lambda,\alpha}^z\|_2\lesssim
\begin{cases}
|\lambda|^{-1/2},~~~~~~ ~~~~~~ ~~~~\quad~~
\sigma:=\Re(z)>\sigma_0;  \\
|\lambda|^{-1/2}\log(2+|\lambda|),
~~~~~~\sigma:=\Re(z)=\sigma_0;\\
|\lambda|^{-\frac{\sigma}{d}-\frac{n_X+n_Y}{2d}},
~~~~~-\min\{n_X,n_Y\}<\sigma:=\Re(z)<\sigma_0.
\end{cases}
\end{equation}
In these damping estimates, the implicit constants can take the form $C(1+|z|^2)$ with $C$ independent of $\lambda$ and $z$.

Now we turn to prove (\ref{sec3 damping esti 2}). As in Case 1, we use the operator version van der Corput lemma in Lemma \ref{operator van der Corput} to obtain
\begin{equation*}
\|W_{\lambda,\alpha,k}^z\|_2
\leq
C(1+|z|^2)\Big(|\lambda|2^{kd}\Big)^{-1/2}
2^{k\sigma} 2^{k(n_X+n_Y)/2}.
\end{equation*}
On the other hand, the size of the support of $W_{\lambda,\alpha,k}^z$ implies
\begin{equation*}
\|W_{\lambda,\alpha,k}^z\|_2
\leq
C 2^{k\sigma} 2^{k(n_X+n_Y)/2}.
\end{equation*}
For $z$ with real part $\sigma$, we have
\begin{eqnarray*}
\|W_{\lambda,\alpha}^z\|_2
&\lesssim&
\sum_{k}\min\Big\{\Big(|\lambda|2^{kd}\Big)^{-1/2}
2^{k\sigma} 2^{k(n_X+n_Y)/2},2^{k\sigma} 2^{k(n_X+n_Y)/2}\Big\}\\
&\lesssim&
\sum_{2^{kd}|\lambda|>1}\Big(|\lambda|2^{kd}\Big)^{-1/2}
2^{k\sigma} 2^{k(n_X+n_Y)/2}
+
\sum_{2^{kd}|\lambda|\leq 1}2^{k\sigma} 2^{k(n_X+n_Y)/2}.
\end{eqnarray*}
If $\sigma>-\min\{n_X,n_Y\}$, then it follows from $\sigma+(n_X+n_Y)/2>0$ that the above second summation is bounded by a constant multiple of $|\lambda|^{-\sigma/d-(n_X+n_Y)/(2d)}$.
For the first summation, we obtain an upper bound $\lesssim |\lambda|^{-1/2}$ for $\sigma>\sigma_0$. In the strip $-\min\{n_X,n_Y\}<\sigma\leq \sigma_0$, the first summation satisfies the same estimate as the second one, up to a logarithmic term for $\sigma=\sigma_0$. Combining these estimates, we obtain (\ref{sec3 damping esti 2}).

For $p$ in the range (\ref{sec3 range lp boundedness}) and $p\leq 2$, the parameter $\theta$ for which $1/p=\theta/2+1-\theta$ must satisfy
$\theta_0=\frac{2n_X}{d+n_X-n_Y}<\theta\leq 1$. It should be pointed out that $p$ is just the left endpoint in the interval (\ref{sec3 range lp boundedness}) if $\theta=\theta_0$. If $-n_X(1-\theta)
+\sigma\theta=0$ then $0<\sigma<\sigma_0$.

Recall that $W_{\lambda,\alpha}^z$ is bounded from $L^1(\bR^{n_Y})$ into $L^{1,\infty}(\bR^{n_X})$ for $\Re(z)=-n_X$. By interpolation in Lemma \ref{sec2 interp change of meas}, we have
\begin{equation*}
\|W_{\lambda,\alpha}^z\|_p
\lesssim
|\lambda|^{-\l(\frac{\sigma}{d}
+\frac{n_X+n_Y}{2d}\r)\theta},~~~\Re(z)=0.
\end{equation*}
Here $\theta$ satisfies $1/p=1-\theta/2$ and $-n_X(1-\theta)+\sigma\theta=0$. Thus $\theta=2/p'$ and $\sigma=n_X(p'/2-1)$ with $p'$ being the conjugate exponent of $p$. It follows that the decay exponent above is equal to
\begin{eqnarray*}
\l(\frac{\sigma}{d}
+\frac{n_X+n_Y}{2d}\r)\theta
&=&
\l[\frac{n_X}{d}  \l( \frac{p'}{2}-1 \r)  +  \frac{n_X+n_Y}{2d}\r]\cdot \frac{2}{p'}\\
&=&
\frac{n_X}{dp}+\frac{n_Y}{dp'},
\end{eqnarray*}
as desired.
\\

\textbf{Case 3} $\supp(\Psi_{\alpha})\cap S^{n_X+n_Y-1}\subseteq B_{\rho}(y_j^{\ast})$ for some $j$.\\

In the support of $T_{\lambda,\alpha,k}$, we have $|y| \approx 2^k$ and $|y|\gtrsim |x|$. The argument in this case is in many ways like that of Case 2. Define the damped oscillatory integral operator $W^z_{\lambda,\alpha,k}$ as in Case 2 with the damping factor $|x|^z$ replaced by $|y|^z$. As above, $W_{\lambda,\alpha}^z=\sum_k W_{\lambda,\alpha,k}^z$ satisfies the same damping $L^2$ estimates. The only difference lies in the situation $\Re(z)=-n_X$. In fact, by Fubini's theorem, it is easy to see that $W_{\lambda,\alpha}^z$, with $\Re(z)=-n_X$, is bounded from $L^1(\bR^{n_Y})$ into $L^1(\bR^{n_X})$. More precisely, we have
\begin{eqnarray*}
\int_{\bR^{n_X}} |W_{\lambda,\alpha}^z f(x)|dx
&\lesssim&
\int_{\bR^{n_X}} \left( \int_{|y|\gtrsim |x|} |y|^{-n_X} |f(y)| dy\right) dx\\
&=&
\int_{\bR^{n_Y}} |y|^{-n_X} |f(y)| \left( \int_{|x|\lesssim |y |} dx\right) dy\\
&\lesssim&
\int_{\bR^{n_Y}} |f(y)| dy.
\end{eqnarray*}
By interpolation as in Case 2, $T_{\lambda,\alpha}$ satisfies the desired $L^p$ estimate.

Combining all above results, we complete the proof of the theorem.
\end{proof}

We can define a class of more general damped oscillatory integral operators associated with $T_{\lambda}$ in (\ref{sec1 general oio}). Let $W_{\lambda,D}^z$ be given by
\begin{equation}\label{sec3 general damped oio}
W_{\lambda,D}^zf(x)
=
\int_{\bR^{n_Y}} e^{i\lambda S(x,y)}
|D(x,y)|^z \varphi(x,y)f(d)dy
\end{equation}
where $z\in\mathbb{C}$ and $D$ is a damping function. Under the rank one condition, we have the following
\begin{theorem}\label{sec3 general dampin thm}
Assume $S$ is a real-valued homogeneous polynomial in $\bR^{n_X}\times \bR^{n_Y}$ with degree $d>n_X+n_Y$. Let $D$ be a real-valued homogeneous polynomial which does not vanish away from the origin. If $S$ satisfies the rank one condition, then there exists a constant $C=C(S,D,\varphi)$ such that
\begin{equation}\label{sec3 general damping esti}
\|W_{\lambda,D}^z\|_2
\leq
C(1+|z|)^2
\begin{cases}
|\lambda|^{-1/2},~~~~~~ ~~~~~~~~~\quad
\Re(z)>\frac{d-n_X-n_Y}{2d_D};  \\
|\lambda|^{-1/2}\log(2+|\lambda|),
~~~~\Re(z)=\frac{d-n_X-n_Y}{2d_D};\\
|\lambda|^{-\frac{d_D}{d}\Re(z)-\frac{n_X+n_Y}{2d}},
~~-\frac{\min\{n_X,n_Y\}}{d_D}<\Re(z)<\frac{d-n_X-n_Y}{2d_D},
\end{cases}
\end{equation}
where $d_D$ is the degree of $D$.
\end{theorem}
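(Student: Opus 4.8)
The plan is to mimic, in a streamlined form, the dyadic decomposition from the proof of Theorem~\ref{sec3 main thm 1}. Using the partition of unity $\{\Psi_{\alpha}\}$ on $S^{n_X+n_Y-1}$ constructed there --- each $\Psi_{\alpha}$ homogeneous of degree zero, and on the cone over $\supp(\Psi_{\alpha})$ some fixed mixed derivative $\partial_{x_{i_{\alpha}}}\partial_{y_{j_{\alpha}}}S$ having constant sign and absolute value comparable to a positive constant on the unit sphere --- together with the radial partition $\sum_{k}\Phi(x/2^{k},y/2^{k})=1$ away from the origin, I would write $W_{\lambda,D}^{z}=\sum_{\alpha}\sum_{k}W_{\lambda,D,\alpha,k}^{z}$, where $W_{\lambda,D,\alpha,k}^{z}$ carries the cut-off $\Psi_{\alpha}(x,y)\Phi(x/2^{k},y/2^{k})|D(x,y)|^{z}\varphi(x,y)$. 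Since there are only finitely many $\alpha$, and since for each $\alpha$ one may bound $\|W_{\lambda,D,\alpha}^{z}\|_{2}\le\sum_{k}\|W_{\lambda,D,\alpha,k}^{z}\|_{2}$ directly by the triangle inequality (almost orthogonality is unavailable here, exactly as in Case~2 of the previous proof), it suffices to prove the single-scale bound
$$
\|W_{\lambda,D,\alpha,k}^{z}\|_{2}\;\lesssim\;(1+|z|)^{2}\,2^{kd_{D}\sigma}\,2^{k(n_X+n_Y)/2}\,\min\bigl\{(|\lambda|2^{kd})^{-1/2},\,1\bigr\},\qquad\sigma=\Re(z),
$$
and then to sum the resulting geometric series in $k$ (only $k\le k_{0}$ occur, for an absolute $k_{0}$, since $\varphi$ is supported near the origin).

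For the single-scale estimate I would use two facts. First, because $D$ is homogeneous of degree $d_{D}$ and does not vanish away from the origin, on the annulus $|(x,y)|\approx 2^{k}$ one has the two-sided bound $|D(x,y)|\approx 2^{kd_{D}}$, and $|\partial^{\beta}D(x,y)|\lesssim 2^{k(d_{D}-|\beta|)}$. Secondly, on the cone over $\supp(\Psi_{\alpha})$ the mixed derivative $\partial_{x_{i_{\alpha}}}\partial_{y_{j_{\alpha}}}S$, homogeneous of degree $d-2$, satisfies $|\partial_{x_{i_{\alpha}}}\partial_{y_{j_{\alpha}}}S|\approx 2^{k(d-2)}$ with constant sign there. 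Freezing all variables except $x_{i_{\alpha}}$ and $y_{j_{\alpha}}$, Lemma~\ref{operator van der Corput} applies to the resulting $(1+1)$-dimensional operator with $\mu\approx 2^{k(d-2)}$; the needed bounds on $\partial_{y_{j_{\alpha}}}^{m}$ of the full cut-off for $m\le 2$, weighted by the $m$-th power of the cross-section length $\approx 2^{k}$, are obtained from the product formula of Lemma~\ref{sec2 poly type func lemma} for the derivatives of $|D|^{z}$ combined with the estimates $|D|\approx 2^{kd_{D}}$, $|\partial^{\beta}D|\lesssim 2^{k(d_{D}-|\beta|)}$ above. This yields $C(1+|z|)^{2}2^{kd_{D}\sigma}(|\lambda|2^{k(d-2)})^{-1/2}$ for the frozen operator, and Schur's test in the remaining $n_X-1$ and $n_Y-1$ variables (each confined to a set of size $\approx 2^{k}$) contributes a factor $2^{k(n_X+n_Y-2)/2}$; multiplying gives the first alternative. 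The second (trivial) alternative is Schur's test applied directly, using only the size $|D|^{z}\lesssim 2^{kd_{D}\sigma}$ of the kernel.

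Finally I would sum over $k\le k_{0}$, splitting at $|\lambda|2^{kd}\approx 1$. On the range $|\lambda|2^{kd}\le 1$ the trivial bound gives a geometric series with ratio $2^{d_{D}\sigma+(n_X+n_Y)/2}$, whose exponent is positive precisely because $\sigma>-\min\{n_X,n_Y\}/d_{D}\ge -(n_X+n_Y)/(2d_{D})$, so this part is dominated by its top term at $2^{k}\approx|\lambda|^{-1/d}$ and is $\lesssim|\lambda|^{-\frac{d_{D}}{d}\sigma-\frac{n_X+n_Y}{2d}}$. On the range $|\lambda|2^{kd}>1$ the oscillatory bound gives a series with ratio $2^{d_{D}\sigma+(n_X+n_Y-d)/2}$: for $\sigma<(d-n_X-n_Y)/(2d_{D})$ the exponent is negative and the sum is again dominated at $2^{k}\approx|\lambda|^{-1/d}$, yielding $|\lambda|^{-\frac{d_{D}}{d}\sigma-\frac{n_X+n_Y}{2d}}$; for $\sigma>(d-n_X-n_Y)/(2d_{D})$ the exponent is positive, the sum is dominated at $k=k_{0}$ and is $\lesssim|\lambda|^{-1/2}$ (which also absorbs the low-frequency term in this regime, since then $\frac{d_{D}}{d}\sigma+\frac{n_X+n_Y}{2d}>\frac12$); and for $\sigma=(d-n_X-n_Y)/(2d_{D})$ there are $\approx\log(2+|\lambda|)$ equal terms of size $\approx|\lambda|^{-1/2}$. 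These three regimes are exactly the three cases of $(\ref{sec3 general damping esti})$. The step I expect to be the main obstacle is the bookkeeping in the single-scale estimate: verifying that the damping factor $|D|^{z}$ does not spoil the operator van der Corput bound. This is where one genuinely needs that $D$ is homogeneous and nonvanishing off the origin (so that $|D|$ is comparable to $2^{kd_{D}}$ from both sides on each annulus, allowing Lemma~\ref{sec2 poly type func lemma} to apply with only a polynomial, namely $(1+|z|)^{2}$, loss in $z$); the rest is a routine dyadic summation.
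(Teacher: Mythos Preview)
Your proposal is correct and follows exactly the approach the paper intends: the paper itself says only that the proof is ``the same as that of the damping estimates (\ref{sec3 damping esti 2})'' and omits the details, and what you have written is precisely that argument with $|x|^{z}$ replaced by $|D(x,y)|^{z}$ and $2^{k\sigma}$ replaced by $2^{kd_{D}\sigma}$. Your identification of the one genuinely new point --- that the two-sided bound $|D|\approx 2^{kd_{D}}$ on each annulus (from homogeneity plus nonvanishing off the origin) is exactly what makes Lemma~\ref{sec2 poly type func lemma} applicable to $|D|^{z}$ with only a $(1+|z|)^{2}$ loss --- is spot on.
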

\begin{remark}
Under the assumptions in the theorem, we can take $D(x,y)=|x|^2+|y|^2$. Also, the damping function can be chosen as the Hilbert-Schmidt norm of the Hessian of the phase function, i.e.,
$$D(x,y)
=
\Big(\sum_{i=1}^{n_X}\sum_{j=1}^{n_Y}
|\partial_{x_i}\partial_{y_j}S(x,y)|^2 \Big)^{1/2}.$$
Generally, $D$ is not a polynomial but the above damping estimates are still true with $d_D=d-2$. In the special case $n_X=n_Y$, the damping estimates in the theorem, with $D$ being the Hilbert-Schmidt norm of the Hessian of $S$, were proved by Xu-Yan {\rm\cite{xuyan}}. For $(1+1)-$dimensional damping estimates with $D=S''_{xy}$, we refer the reader to Seeger {\rm\cite{seeger2}} and Phong-Stein {\rm\cite{PS1998}}.
\end{remark}

The proof of Theorem \ref{sec3 general dampin thm} is the same as that of the damping estimates (\ref{sec3 damping esti 2}). We omit the details here.

\section{Endpoint $L^p$ estimates}

Until now, we do not know whether endpoint $L^p$ estimates in Theorem \ref{sec3 main thm 1} are true or not. Our proof in Section 3 breaks down since it will produce a logarithmic term. More precisely, we only have
\begin{equation}
\|T_{\lambda}f\|_{L^p}\leq C|\lambda|^{-\gamma} \log^{\delta}(2+|\lambda|)
\|f\|_{L^p},~~~
p\in\left\{ \frac{d-n_Y+n_X}{d-n_Y}, \frac{d-n_X+n_Y}{n_Y}  \right\},
\end{equation}
where $\delta>0$ is a number in $(0,1)$ and $\gamma$ is given by (\ref{sec3 main Lp esti}). In this section, our purpose is to remove this logarithmic term under certain assumptions.

We first introduce a useful notion of nondegeneracy for the phase $S$.

\begin{defn}\label{sec4 radially nondegeneracy}
Assume $G$ is a continuously differentiable function from $\bR^{m}$ into $\bR^n$. Then $G$ is said to be radially nondegenerate if
$(x\cdot \nabla_x)G(x)\neq 0$ for all $x\neq 0$.
\end{defn}

\begin{lemma}
Assume $S$ is a homogeneous polynomial in $\bR^{n_X}\times\bR^{n_Y}$. Let $P_i(x)=\partial_{y_i}S(x,y)|_{y=0}$ and $Q_j(y)=\partial_{x_j}S(x,y)|_{x=0}$. Then $\nabla_{y}S(x,0)$ is radially nondegenerate in the $X-$space $\bR^{n_X}$ if and only if $(P_1(x),P_2(x),\cdots,P_{n_Y}(x))\neq 0$ for $x\neq 0$. Similarly, $\nabla_{x}S(0,y)$ is radially nondegenerate in the $Y-$space $\bR^{n_Y}$ if and only if $(Q_1(y),Q_2(y),\cdots,Q_{n_X}(y))\neq 0$ for $y\neq 0$.
\end{lemma}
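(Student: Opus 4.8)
The plan is to reduce the statement to Euler's relation for homogeneous functions, after which it becomes essentially immediate. By the symmetry between the roles of the $X$- and $Y$-spaces (the second assertion is the first applied with $x$ and $y$ interchanged), it suffices to prove the first equivalence. I would write $G(x):=\nabla_y S(x,0)$, viewed as a map from $\bR^{n_X}$ into $\bR^{n_Y}$; its $i$-th component is exactly $G_i(x)=\partial_{y_i}S(x,y)\big|_{y=0}=P_i(x)$, so $G=(P_1,\dots,P_{n_Y})$, and the task is to show that $(x\cdot\nabla_x)G(x)\ne 0$ for all $x\ne 0$ holds if and only if $(P_1(x),\dots,P_{n_Y}(x))\ne 0$ for all $x\ne 0$.

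The single observation needed is a homogeneity count. Since $S$ is homogeneous of degree $d$ in $(x,y)$ — and in the present setting $d>n_X+n_Y\ge 2$ — each $\partial_{y_i}S$ is homogeneous of degree $d-1$ in $(x,y)$; setting $y=0$ discards every monomial divisible by some $y_k$, leaving $P_i$ a homogeneous polynomial of degree $d-1$ in the variable $x$ alone (possibly the zero polynomial, in which case the assertions below hold trivially for that index). Euler's identity then gives $x\cdot\nabla_x P_i(x)=(d-1)\,P_i(x)$ for each $i$ and all $x$, whence
\[
(x\cdot\nabla_x)G(x)=(d-1)\,\big(P_1(x),\dots,P_{n_Y}(x)\big),\qquad x\in\bR^{n_X}.
\]

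Since $d-1\ne 0$, the left-hand vector vanishes at a point $x$ exactly when the right-hand vector does; hence $G$ is radially nondegenerate if and only if $(P_1,\dots,P_{n_Y})$ has no zero other than the origin, which is the first claim, and the second follows by interchanging $X$ and $Y$. I do not expect a genuine obstacle here: the only point worth a remark is that the argument uses $d\ge 2$ so that $d-1\ne 0$, which is guaranteed throughout the paper by the standing hypothesis $d>n_X+n_Y$; and when all $P_i\equiv 0$ both sides of the equivalence are (consistently) false, so no degenerate case needs separate treatment.
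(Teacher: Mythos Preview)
Your proof is correct and follows essentially the same route as the paper: both arguments observe that each $P_i$ is homogeneous of degree $d-1$ in $x$, apply Euler's identity to obtain $(x\cdot\nabla_x)P_i=(d-1)P_i$, and conclude the equivalence from $d-1\ne 0$. The only cosmetic difference is that the paper justifies $d\ge 2$ by noting that radial nondegeneracy involves second-order derivatives, whereas you invoke the standing hypothesis $d>n_X+n_Y$; either suffices.
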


\begin{proof}
Denote by $d$ the degree of $S$. Since the notion of radial nondegeneracy involves partial derivatives of second order, our assumptions imply that $d\geq 2$ and $P_i, Q_j$ are homogeneous polynomials of degree $d-1$. By Euler's formula for homogeneous functions,
\begin{equation*}
(x\cdot\nabla_x)\partial_{y_i}S(x,0)
=(x\cdot\nabla_x)P_i(x)=(d-1)P_i(x).
\end{equation*}
Similarly, $(y\cdot\nabla_y)\partial_{x_j}S(0,y)=(d-1)  Q_j(y)$. By Definition \ref{sec4 radially nondegeneracy}, the statement in the lemma follows immediately.
\end{proof}

\begin{remark}
In the $X$ and $Y$ spaces, the rank one condition is slightly weaker than the radial nondegeneracy of $\nabla_yS(x,0)$ and $\nabla_xS(0,y)$. For example, consider the rank one condition in the $X-$space. Since $y=0$ in the $X-$space, the rank one condition, at the point $(x,0)$ with $x\neq 0$, implies $\nabla_xP_i(x)\neq 0$ for some $i$. However, the radial nondegeneracy of $\nabla_yS(x,0)$ is equivalent to $P_i(x)=(d-1)^{-1}(x\cdot\nabla_x)P_i(x)\neq 0$ for all $i$.
\end{remark}

With the concept of radial nondegeneracy, we are able to establish the endpoint $L^p$ estimates in Theorem \ref{sec3 main thm 1}.

\begin{theorem}
Assume $S$ is a real-valued homogeneous polynomial with degree $d>n_X+n_Y$. Suppose $S$ satisfies the following two conditions:

\noindent {\rm (i)} $S$ satisfies the rank one condition in $\bR^{n_X} \times \bR^{n_Y}$.

\noindent {\rm (ii)} $\nabla_yS(x,0)$ and $\nabla_xS(0,y)$ are radially nondegenerate in $\bR^{n_X} $ and $\bR^{n_Y}$, respectively.

\noindent Then $T_{\lambda}$ in {\rm (\ref{sec1 general oio})} satisfies the $L^p$ estimate {\rm (\ref{sec3 main Lp esti})} for
\begin{equation*}
\frac{d-n_Y+n_X}{d-n_Y}
\leq p \leq
\frac{d-n_X+n_Y}{n_Y}.
\end{equation*}
Moreover, under the assumptions in Theorem {\rm \ref{sec3 general dampin thm}}, the damping estimates {\rm (\ref{sec3 general damping esti})} are still true without the logarithmic term $\log(2+|\lambda|)$.
\end{theorem}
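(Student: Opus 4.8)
The plan is to localize on the unit sphere exactly as in the proof of Theorem~\ref{sec3 main thm 1}, reusing the partition of unity $\{\Psi_\alpha\}$ and the dyadic decomposition $T_{\lambda,\alpha,k}$, so that it suffices to establish the endpoint estimate for each family $T_{\lambda,\alpha}=\sum_k T_{\lambda,\alpha,k}$. For pieces of Case~1 type (support bounded away from both the $X$- and $Y$-spaces) the almost orthogonality in $k$ survives and the Riesz--Th\"orin interpolation already gives the endpoint without any logarithmic loss, so nothing new is needed there. The logarithmic term arose only in Cases~2 and~3, where the $L^2$ damping estimate \eqref{sec3 damping esti 2} is merely $|\lambda|^{-1/2}\log(2+|\lambda|)$ at the critical exponent $\sigma=\sigma_0=\frac{d-n_X-n_Y}{2}$. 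The whole point is to upgrade that critical-case estimate to a clean $|\lambda|^{-1/2}$ bound using hypothesis (ii).

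Concretely, I would focus on Case~2, where the damping factor is $|x|^z$ and one has $|x|\approx 2^k$, $|x|\gtrsim|y|$ on the support of $T_{\lambda,\alpha,k}$; Case~3 is symmetric with $|y|^z$. The key observation is that on $B_\rho(x_i^\ast)$ we are near the $X$-space, so $y$ is small relative to $x$, and by radial nondegeneracy of $\nabla_y S(x,0)$ (equivalently, by the preceding lemma, $(P_1(x),\dots,P_{n_Y}(x))\neq 0$ for $x\neq 0$) the phase $S(x,y)$, expanded in $y$ around $y=0$, has a genuinely non-stationary leading term: $S(x,y)=S(x,0)+\sum_i P_i(x)y_i+O(|y|^2)$ with $|\nabla_y S(x,0)|=|(P_i(x))_i|\approx 2^{k(d-1)}$ on the relevant cone. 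After rescaling $x\mapsto 2^k x$, $y\mapsto 2^k y$ one reduces to a fixed compact region with phase $2^{kd}S$ and a large parameter $\mu_k:=|\lambda|2^{kd}$; the non-vanishing gradient in $y$ lets one integrate by parts in $y$ (a standard non-stationary phase / van der Corput in one of the $y_j$-directions after choosing $j$ with $P_j(x)\neq 0$) to gain, on the region where $|y|\lesssim \mu_k^{-1/d}$ is violated, extra decay that kills the borderline divergence of the sum over $k$. More precisely, I expect that combining the operator van der Corput estimate \eqref{sec3 damping esti 2} with an $L^1\to L^\infty$ (or $L^2\to L^2$ non-stationary) bound of the form $\|W^z_{\lambda,\alpha,k}\|_2\lesssim (1+|z|)^2 (|\lambda|2^{kd})^{-N}2^{k\sigma}2^{k(n_X+n_Y)/2}$ for large $N$ on the range $2^{kd}|\lambda|\gtrsim 1$ replaces the harmonic-type sum $\sum_k (|\lambda|2^{kd})^{-1/2}2^{k\sigma_0}2^{k(n_X+n_Y)/2}=\sum_k 1$ by a geometrically convergent one, yielding $\|W^z_{\lambda,\alpha}\|_2\lesssim (1+|z|)^2|\lambda|^{-1/2}$ at $\sigma=\sigma_0$. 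With the logarithm removed at the critical exponent, the analytic-interpolation argument of Case~2 (Lemma~\ref{sec2 interp change of meas}, using $\Re z=-n_X$ giving $L^1\to L^{1,\infty}$ and $\Re z=\sigma_0$ giving the damped $L^2$ bound) now produces the endpoint $p=\frac{d-n_Y+n_X}{d-n_Y}$ with the sharp decay $\gamma$ and no logarithmic factor; duality (or Case~3 directly) handles $p=\frac{d-n_X+n_Y}{n_Y}$.

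For the last assertion of the theorem — that the damping estimates \eqref{sec3 general damping esti} hold without the $\log(2+|\lambda|)$ factor — I would run the same localization on $S^{n_X+n_Y-1}$ for the operator $W^z_{\lambda,D}$, noting that $|D(x,y)|\approx 2^{kd_D}$ on the support of the $k$-th dyadic piece (since $D$ is homogeneous of degree $d_D$ and nonvanishing off the origin), so that $|D|^z$ behaves like a $2^{k d_D \Re z}$-sized smooth factor and the dyadic sum is exactly of the form analyzed in \eqref{sec3 general damping esti}; again the borderline case $\Re z=\frac{d-n_X-n_Y}{2d_D}$ is the only one producing a logarithm, and the same non-stationary-phase gain near the $X$- and $Y$-spaces — now available thanks to radial nondegeneracy — removes it, while the generic directions (Case~1 type) were never problematic.

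The main obstacle I anticipate is making the non-stationary phase gain uniform in $z$ with the claimed polynomial growth $(1+|z|)^2$: integrating by parts in $y_j$ against $e^{i\lambda S}|D|^z\Psi_\alpha\Phi\varphi$ produces derivatives falling on $|D(x,y)|^z$, which by Lemma~\ref{sec2 poly type func lemma} (applied on the region where $D$ has fixed sign and size comparable to a constant after rescaling) are controlled by powers of $|z|$ times derivatives of $D$ of bounded size; one must check that iterating $N$ times costs only $C^N(1+|z|)^N$ and that $N$ can be taken a fixed finite integer (any $N>1$ suffices to beat $\sum_k$), so the final $z$-dependence stays $(1+|z|)^{O(1)}$, which after the analytic interpolation contracts to the stated $(1+|z|)^2$-type constants. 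A secondary technical point is verifying, via the lemma relating radial nondegeneracy to $(P_i(x))_i\neq0$, that near $B_\rho(x_i^\ast)$ one can choose the integration-by-parts direction $j$ uniformly on the (compact, after rescaling) support, i.e.\ that some $|P_j(x)|$ is bounded below there — this follows by compactness and a further refinement of the cover if $\rho$ is taken small enough.
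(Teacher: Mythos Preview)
Your overall strategy --- reuse the localization from Theorem~\ref{sec3 main thm 1}, note that Case~1 already gives the endpoint, and remove the logarithm in Cases~2 and~3 at the critical damping exponent $\sigma_0$ --- matches the paper's. The gap is in \emph{how} you propose to remove the logarithm.

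You claim that radial nondegeneracy of $\nabla_y S(x,0)$ yields, via integration by parts in $y$, a single-scale bound of the form
\[
\|W^z_{\lambda,\alpha,k}\|_2 \;\lesssim\; (1+|z|)^2\bigl(|\lambda|2^{kd}\bigr)^{-N} 2^{k\sigma} 2^{k(n_X+n_Y)/2}
\]
for arbitrarily large $N$. This is false. The operator $W^z_{\lambda,\alpha,k}$ acts by integrating $e^{i\lambda S(x,y)}a(x,y)f(y)$ in $y$; integrating by parts in $y$ throws derivatives onto $f$, so a large $|\nabla_y S|$ gives no gain whatsoever on the $L^2\to L^2$ norm of a \emph{single} dyadic piece. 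Concretely, testing against $f$ supported in a ball of radius $\sim(|\lambda|2^{k(d-1)})^{-1}$ (so that $\lambda S(x,\cdot)$ does not oscillate over $\supp f$) shows the single-scale $L^2$ norm is bounded below by a fixed negative power of $|\lambda|2^{kd}$, not by $(|\lambda|2^{kd})^{-N}$. The sum $\sum_k$ at $\sigma=\sigma_0$ therefore cannot be made geometric by improving each term separately.

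What the paper does instead is exploit the non-stationary $y$-phase at the level of the \emph{interaction} between different scales: one studies $W^z_{\lambda,D,\alpha,k}W^{z\,\ast}_{\lambda,D,\alpha,l}$, whose kernel is an honest oscillatory integral in $y$ (no $f$ present), with phase $S(x,y)-S(u,y)$. For $|k-l|$ large and $|x|\approx 2^k\gg 2^l\approx |u|$, radial nondegeneracy forces
\[
\bigl|\partial_{y_{i_\alpha}}S(x,y)-\partial_{y_{i_\alpha}}S(u,y)\bigr|\;\approx\;2^{k(d-1)}
\]
on the support (this is where hypothesis~(ii) enters, via (\ref{sec4 size of mixed Hessin}) in the paper's argument), and a single integration by parts in $y_{i_\alpha}$ plus the Schur test yields the almost orthogonality
\[
\|W^z_{\lambda,D,\alpha,k}W^{z\,\ast}_{\lambda,D,\alpha,l}\|_2 \;\lesssim\; |\lambda|^{-1}2^{-|k-l|\delta},\qquad \delta=\tfrac{d+n_Y}{2}-1>0,
\]
at $\Re(z)=\sigma_0$. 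The Cotlar--Knapp--Stein lemma then sums to $\|W^z_{\lambda,D,\alpha}\|_2\lesssim|\lambda|^{-1/2}$ with no logarithm. Your interpolation step (Lemma~\ref{sec2 interp change of meas} with $\Re(z)=-n_X$ on $L^1$ and $\Re(z)=\sigma_0$ on $L^2$) is then correct and gives the endpoint $p$. So the fix is: replace your single-scale non-stationary phase argument by a $TT^\ast$ almost-orthogonality estimate between different dyadic scales.
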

\begin{remark}
There may be no phases satisfying Assumptions (i) and (ii) if $S$ has an even degree. For example, let $n_X\geq 2$ and $n_Y=1$. Assume $S$ is a homogeneous polynomial and its degree $d$ is even. Then $\partial_yS(x,0)$ is homogeneous in $\bR^{n_X}$ and its degree is odd. Since $n_X\geq 2$, one can see that $\partial_y S(x,0)$ has zeros away from the origin.

However, if $n_X=n_Y$ or if $d\geq 2$ is odd, then homogeneous phases $S$ satisfying (i) and (ii) always exist. We take two examples, due to Greenleaf, Pramanik and Tang \cite{GPT}, in the following:

{\rm (1)} If $n_X=n_Y$,
$S(x,y)=\frac{1}{d-1}\left(\sum_{i=1}^{n_X}x_i^{d-1}y_i
+\sum_{i=2}^{n_X}x_{i-1}y_i^{d-1}+x_{n_X}y_1^{d-1}\right).$

{\rm (2)} Assume $n_X>n_Y$ and $d\geq 2$ is odd. For example, we can take $S$ as
$$S(x,y)=\frac{1}{d-1}\left(\sum_{i=1}^{n_Y}x_i^{d-1}y_i
+\sum_{i=2}^{n_Y}x_{i-1}y_i^{d-1}+x_{n_Y}y_1^{d-1}
+\sum_{i=n_Y+1}^{n_X}\sum_{j=1}^{n_Y}x_i^{d-1}y_j\right).$$
\end{remark}

\begin{proof}
For clarity, we shall divide our proof into two steps.
The first step is to prove the $L^2$ damping estimates, from which the desired endpoint $L^p$ estimates follow immediately. \\

\textbf{Step 1}. Proof of $L^2$ damping estimates.\\

As in our proof of Theorem \ref{sec3 main thm 1}, we shall consider three cases separately. Let $W_{\lambda,D,\alpha,k}^z$ be defined as $T_{\lambda,\alpha,k}$ by insertion of a damping factor $|D(x,y)|^z$, i.e.,
\begin{equation*}
W_{\lambda,D,\alpha,k}^zf(x)=\int_{\bR^{n_Y}} e^{i\lambda S(x,y)}
\Psi_{\alpha}(x,y)
\Phi\left(\frac{x}{2^k},\frac{y}{2^k}\right)
|D(x,y)|^z
\varphi(x,y)f(d)dy.
\end{equation*}
Taking summation over $k$, we define $W_{\lambda,D,\alpha}^z
=\sum_kW_{\lambda,D,\alpha,k}^z$.
Since $W_{\lambda,D,\alpha,k}^z=0$ for all $k\gtrsim 1$, we assume $k\lesssim 1$ from now on. \\

\textbf{Case 1} $\supp(\Psi_{\alpha})\cap S^{n_X+n_Y-1}\subseteq B_{\rho}(w_i)$ for some $i$.\\

On the support of $W_{\lambda,D,\alpha,k}^z$, we have
$|D(x,y)|\approx 2^{kd_D}$ since $D$ is a homogeneous
polynomial which does not vanish away from the origin.
Here $d_D$ is the degree of $D$. We first apply Lemma \ref{operator van der Corput} to two variables $x_s$ and
$y_t$ for which $\partial_{x_s}\partial_{y_t}S\neq 0$ on the support of $\Psi_{\alpha}$, and then make use of the size estimate to other variables. This will lead to the following estimate:
\begin{eqnarray}\label{sec4 L2 damping osci est}
\|W_{\lambda,D,\alpha,k}^z\|_2
&\leq & C(1+|z|^2)
\Big(|\lambda|2^{(k-2)d}\Big)^{-1/2}
2^{kd_D\sigma}
2^{k(n_X-1)/2}
2^{k(n_Y-1)/2}  \nonumber \\
&\leq &
C(1+|z|^2)
\Big(|\lambda|2^{kd}\Big)^{-1/2}
2^{kd_D\sigma}
2^{k(n_X+n_Y)/2}
\end{eqnarray}
where $\sigma:=\Re(z)$. In view of $|x|\approx 2^k$ and $|y|\approx 2^k$ on the support of $W_{\lambda,D,\alpha,k}^z$, the Schur test gives
\begin{equation}\label{sec4 L2 damping size est}
\|W_{\lambda,D,\alpha,k}^z\|_2
\leq
C2^{kd_D\sigma}
2^{k(n_X+n_Y)/2}.
\end{equation}
For $\sigma\geq \frac{d-n_X-n_Y}{2d_D}$, the exponent of $2^k$ is nonnegative in (\ref{sec4 L2 damping osci est}). This implies $\|W_{\lambda,D,\alpha,k}^z\|_2\lesssim |\lambda|^{-1/2}$.
By Lemma \ref{sec2 almost ortho lemma}, as shown in our proof of Theorem \ref{sec3 main thm 1}, the desired damping estimate holds for $W_{\lambda,D,\alpha}^z=\sum_kW_{\lambda,D,\alpha,k}^z$. For $-\min\{n_X,n_Y\}/d_D < \sigma <\frac{d-n_X-n_Y}{2d_D}$, a convex combination of the above two estimates, annihilating the exponent of $2^k$, gives the desired estimate.\\

\textbf{Case 2} $\supp(\Psi_{\alpha})\cap S^{n_X+n_Y-1}\subseteq B_{\rho}(x_i^{\ast})$ for some $i$.\\

The oscillation and size estimates in Case 1 are still true here. However, the almost orthogonality property there does not hold now. As shown in (\ref{sec3 general damping esti}), we need only show the optimal decay $|\lambda|^{-1/2}$ for the critical exponent $\sigma=\frac{d-n_X-n_Y}{2d_D}$.
For this estimate, we claim that there exists a positive number $\delta>0$ such that
\begin{equation}\label{sec4 almost ortho in X space}
\|W_{\lambda,D,\alpha,k}^zW_{\lambda,D,\alpha,l}^{z~\ast}\|
\leq
C(z)|\lambda|^{-1} 2^{-|k-l|\delta}, ~~~\Re(z)=\frac{d-n_X-n_Y}{2d_D}.
\end{equation}
Note that $|x|\approx 2^k$ in the support of $W_{\lambda,D,\alpha,k}^z$. Hence $W_{\lambda,D,\alpha,k}^{z~\ast}
W_{\lambda,D,\alpha,l}^{z}=0$ provided that $|k-l|$ is sufficiently large.

To establish (\ref{sec4 almost ortho in X space}), we shall further impose smallness conditions on the open cover $\{\mathcal{O}_{\alpha}\}$, constructed in the proof of Theorem \ref{sec3 main thm 1}. Choose two open circular cones $U_{\alpha}$ and $V_{\alpha}$, with the same vertex at the origin, such that\\

(i) $\supp(\Psi_{\alpha})\subseteq \overline{U_{\alpha}}$ and $\overline{U_{\alpha}}\backslash \{0\} \subseteq V_{\alpha}$. Here $\overline{U_{\alpha}}$ denotes the closure of $U_{\alpha}$.\\

(ii)$\supp(\Psi_{\alpha})$, $U_{\alpha}$ and $V_{\alpha}$ are so small that for some $i_{\alpha}$,
$(w\cdot \nabla_x)\partial_{ y_{i_{\alpha}} }S(x,y)$
has fixed sign and does not vanish for all $w\in\mathbb{P}_X(\overline{U_{\alpha}}\backslash \{0\})$ and $(x,y)\in \overline{U_{\alpha}}\backslash \{0\}$. Here $\mathbb{P}_X$ is the projection from $\bR^{n_X}\times\bR^{n_Y}$ onto $\bR^{n_X}$, i.e., $\mathbb{P}_X(x,y)=x$.\\

The assumption (ii) does not lose generality since $\nabla_yS(x,0)$ is radially nondegenerate in the $X-$space. The assumption (i) implies that there exists a large number $N_{\alpha}\geq 1$ such that if $x,x'\in \mathbb{P}_X(U_{\alpha})$ and $|x|/|x'|\geq N_{\alpha}$ then $x-x'\in \mathbb{P}_X(V_{\alpha})$. Combining this observation together the assumption (ii), we obtain
\begin{equation}\label{sec4 size of mixed Hessin}
|\partial_{  y_{i_{\alpha}}  }S(x,y)-
\partial_{  y_{i_{\alpha}}  }S(u,y)|
=\Big|(x-u)\cdot \int_0^1 \nabla_x\partial_{ y_{i_{\alpha} } } S( \theta x+ (1-\theta)u, y) d\theta  \Big|
\approx
2^{k(d-1)}
\end{equation}
for all $(x,y),(u,y)\in U_{\alpha}$ and
$\Phi( x/2^k, y/2^k )
\Phi( u/2^l, y/2^l )\neq 0$, provided that $|k-l|$, assuming $k\geq l$, is sufficiently large.

In what follows, our purpose is to prove the almost orthogonality estimate (\ref{sec4 almost ortho in X space}) by the $TT^{\ast}$ method. First observe that the integral kernel associated with $W_{\lambda,D,\alpha,k}^zW_{\lambda,D,\alpha,l}^{z~\ast}$
is given by
\begin{eqnarray*}
K(x,u)
&=&
\int_{ \bR^{n_Y} } e^{i\lambda[S(x,y)-S(u,y)]}
\Psi_{\alpha}(x,y) \overline{ \Psi_{\alpha} (u,y) }
\Phi\Big( \frac{x}{2^k}, \frac{y}{2^k} \Big)
\overline{\Phi\Big( \frac{u}{2^l}, \frac{y}{2^l} \Big)}\times\\
& &~~~~~~~~\quad\quad
|D(x,y)|^z |D(u,y)|^{\overline{z}}
\varphi(x,y) \overline{\varphi(u,y)}dy.
\end{eqnarray*}
Since
$W_{\lambda,D,\alpha,k}^zW_{\lambda,D,\alpha,l}^{z~\ast}$
and $W_{\lambda,D,\alpha,l}^{z}W_{\lambda,D,\alpha,k}^{z~\ast}$
have equal $L^2$ operator norms, we can assume $k\geq l$ in the above estimate.

For $f\in C^{1}$, we define a linear differential operator
\begin{equation*}
\mathcal{D}f(y)
=
\Big[ i\lambda \Big(\partial_{ y_{ i_{\alpha} }}S(x,y)
-
\partial_{ y_{ i_{\alpha} }}S(u,y) \Big) \Big]^{-1}
\partial_{ y_{ i_{\alpha} }} f(y)
\end{equation*}
and its transpose $\mathcal{D}^t$ by the equality $\int \mathcal{D}f(y)g(y)dy=\int f(y) \mathcal{D}^tg(y) dy$
for all $f,g\in C^1_c$. It is clear that $\mathcal{D}e^{i\lambda[S(x,y)-S(u,y)]}
=e^{i\lambda[S(x,y)-S(u,y)]}$.
By integration by parts, we have
\begin{eqnarray}\label{sec4 Kernel of TTsat}
K(x,u)
&=&
\int_{ \bR^{n_Y} } e^{i\lambda[S(x,y)-S(u,y)]}
\mathcal{D}^t
\Big( \Psi_{\alpha}(x,y) \overline{ \Psi_{\alpha} (u,y) }
\Phi\Big( \frac{x}{2^k}, \frac{y}{2^k} \Big)
\overline{\Phi\Big( \frac{u}{2^l}, \frac{y}{2^l} \Big)}\times\nonumber \\
& &~~~~~~~~\quad\quad
|D(x,y)|^z |D(u,y)|^{\overline{z}}
\varphi(x,y) \overline{\varphi(u,y)} \Big)dy.
\end{eqnarray}

The phase function $S(x,y)-S(u,y)$ can be viewed as a polynomial in $y_{ i_{\alpha} }$ of degree $\leq d$ with other variables fixed. Without loss of generality, we assume $i_{\alpha}=1$. For arbitrary $x,u\in\bR^{n_X}$, let $E_{\alpha,k,l}(x,u)$ be the set of $y\in \bR^{n_Y}$ such that the integrand in (\ref{sec4 Kernel of TTsat}) does not vanish. Take an arbitrary point $y=(y_1,y')\in E_{\alpha,k,l}(x,u)$ with $y'=(y_2,\cdots,y_{n_Y})\in\bR^{n_Y-1}$. If $E_{\alpha,k,l}(x,u)$ is nonempty, then the above assumptions (i) and (ii) imply that as a function of $y_1$, with $y'$ fixed, $\partial_{y_1}S(x,y)-\partial_{y_1}S(u,y)$ does not change sign and its absolute value $\approx 2^{k(d-1)}$ on an interval $I$ with length $|I|\approx 2^l$.
By Lemma \ref{sec2 poly type func lemma}, we have
\begin{eqnarray*}
\sup_{y_1\in I}
\Big|\partial_{y_1}
\Big(
\partial_{y_1}S(x,y)-\partial_{y_1}S(u,y)
\Big)^{-1}\Big|
&\lesssim&
|I|^{-1}2^{-k(d-1)}
\approx 2^{-l}2^{-k(d-1)},\\
\sup_{y_1\in I}
\Big|\partial_{y_1}|D(x,y)|^z\Big|
&\lesssim&
|I|^{-1}2^{kd_D\sigma}
\approx 2^{-l}2^{kd_D\sigma},\\
\sup_{y_1\in I}
\Big|\partial_{y_1}|D(u,y)|^z\Big|
&\lesssim&
|I|^{-1}2^{kd_D\sigma}
\approx 2^{-l}2^{ld_D\sigma},\\
\end{eqnarray*}
where $\sigma=\Re(z)$ and the above implicit constants are independent of $x$, $u$ and $y'$.

Recall that we have assumed $k\geq l$. The $y_1-$partial derivative of other cut-off functions in (\ref{sec4 Kernel of TTsat}) is bounded by a constant multiple of $2^{-l}$. Hence we deduce the following pointwise estimate from (\ref{sec4 Kernel of TTsat}):
\begin{equation*}
|K(x,y)|
\lesssim
\Big( |\lambda| 2^{k(d-1)} \Big)^{-1} 2^{kd_D\sigma}
2^{ld_D\sigma} 2^{l(n_Y-1)} \chi_{ \{|x|\approx 2^k\} }(x)
\chi_{ \{|u|\approx 2^l\}  }(u).
\end{equation*}
Then
\begin{eqnarray*}
\sup_{x}\int_{\bR^{n_X}}|K(x,u)|du
& \lesssim &
\Big( |\lambda| 2^{k(d-1)} \Big)^{-1} 2^{kd_D\sigma}
2^{ld_D\sigma} 2^{l(n_Y-1)} 2^{ln_X},\\
\sup_{u}\int_{\bR^{n_X}}|K(x,u)|dx
& \lesssim &
\Big( |\lambda| 2^{k(d-1)} \Big)^{-1} 2^{kd_D\sigma}
2^{ld_D\sigma} 2^{l(n_Y-1)} 2^{kn_X}.
\end{eqnarray*}
By the Schur test, we obtain
\begin{eqnarray*}
\|W_{\lambda,D,\alpha,k}^z
W_{\lambda,D,\alpha,l}^{z~\ast}\|_2
&\lesssim&
\Big( |\lambda| 2^{k(d-1)} \Big)^{-1} 2^{kd_D\sigma}
2^{ld_D\sigma} 2^{l(n_Y-1)} 2^{(k+l)n_X/2}\\
&\lesssim&
|\lambda|^{-1}2^{-|k-l|\delta}, ~~~\delta=\frac{d+n_Y}{2}-1>0,
\end{eqnarray*}
where $\sigma=\Re(z)$ is given by (\ref{sec4 almost ortho in X space}). By the Cotlar-Knapp-Stein almost orthogonality principle (see Stein \cite{stein}), we obtain $\|W_{\lambda,D,\alpha}^z\|_2\lesssim |\lambda|^{-1/2}$. \\

\textbf{Case 3} $\supp(\Psi_{\alpha})\cap S^{n_X+n_Y-1}\subseteq B_{\rho}(z_j^{\ast})$ for some $j$.\\

In this case, the damping estimate with critical damping exponent can be proved as in Case 2, with the roles of $x$ and $y$ interchanged. The details are omitted here.\\

\textbf{Step 2}. Proof of the endpoint $L^p$ estimates.\\

In Case 1, $W_{\lambda,D,\alpha}^z$ satisfies (i) $\|W_{\lambda,D,\alpha}^z\|_2\lesssim |\lambda|^{-1/2}$ with $\Re(z)=\frac{d-n_X-n_Y}{2d_D}$ and
(ii) $\|W_{\lambda,D,\alpha}^z\|_1\lesssim 1$ with
$\Re(z)=-\frac{n_X}{d_D}$. By interpolation, we obtain the left endpoint $L^p$ estimate (\ref{sec3 main Lp esti}).

In Case 2, a slight modification is needed in our argument. We shall replace $D(x,y)$ by $|x|^{d_D}$ in the definition of $W_{\lambda,D,\alpha}^z$. Then $\|W_{\lambda,D,\alpha}^z\|_2\lesssim |\lambda|^{-1/2}$ still holds for $\Re(z)=\frac{d-n_X-n_Y}{2d_D}$. The reason is that $D(x,y)$ and $|x|^{d_D}$, together with their partial derivatives, have the same upper bounds in our proof of this critical $L^2$ damping estimate. On the other hand, note that $\|W_{\lambda,D,\alpha}^zf\|_{L^{1,\infty}}\lesssim \|f\|_{L^1}$ for $\Re(z)=-\frac{n_X}{d_D}$. By Lemma \ref{sec2 interp change of meas}, the desired endpoint $L^p$ estimate follows.

The Case 3 can be treated in the same way as Case 2. However, we do not need change the damping factor $D$. Now the critical $L^2$ damping estimate in Case 2 is still true. For $\Re(z)=-\frac{n_X}{d_D}$, the stronger estimate $\|W_{\lambda,D,\alpha}^z\|_{1}\lesssim 1$ holds, as in our proof of Theorem \ref{sec3 main thm 1}. By a duality argument, we are able to prove the right endpoint $L^p$ estimate for (\ref{sec3 main Lp esti}). Thus the proof of the theorem is complete.
\end{proof}

\noindent{\bf Acknowledgements.} We would like to thank Shaozhen Xu for explanation of his work and sharing useful ideas with us.


\begin{thebibliography}{10000}
\addtolength{\itemsep}{-1.5ex}
\bibitem{carbery} Carbery, A., Christ, M., Wright, J.: Multidimensional van der Corput and sublevel estimates. \emph{J. Amer. Math. Soc.}, \textbf{12}, 981-1015 (1999)

\bibitem{CaW}Carbery, A., Wright, J. ``What is van der Corput's lemma in higher dimensions?" \emph{Publicacions Matematiques}, 13-26 (2002)

\bibitem{Christ-Li-Tao-Thiele}Christ M., Li X., Tao T., Thiele, C.: On multilinear oscillatory integrals, nonsingular and singular. \emph{Duke Mathematical Journal}, \textbf{130}, 321-351 (2005)

\bibitem{Gilula}Gilula M., Some oscillatory integral estimates via real analysis. \emph{Mathematische Zeitschrift}, \textbf{289}, 377-403 (2018)

\bibitem{Gilula-Gressman-Xiao}Gilula M., Gressman P. T., Xiao L.: Higher decay inequalities for multilinear oscillatory integrals. \emph{Math. Res. Lett.}, \textbf{25}, 819-842 (2018)

\bibitem{greenblatt1} Greenblatt, M.: A direct resolution of singularities for functions of two variables with applications to analysis. \emph{J. Anal. Math.}, \textbf{92}, 233-257 (2004)

\bibitem{greenblatt2} Greenblatt, M.: Sharp $L^2$ estimates for one-dimensional oscillatory integral operators with $C^\infty$ phase. \emph{Amer. J. Math.}, \textbf{127}, 659-695 (2005)

\bibitem{GPT} Greenleaf, A., Pramanik, M., Tang, W.: Oscillatory integral operators with homogeneous polynomial phases in several variables. \emph{J. Funct. Anal.}, \textbf{244}, 444-487 (2007)

\bibitem{greenleafseeger1} Greenleaf, A., Seeger, A.: Oscillatory and Fourier integral operators with folding canonical relations. \emph{Studia Math.},  \textbf{132}, 125-139 (1999)

\bibitem{greenleafseeger2} Greenleaf, A., Seeger, A.: Oscillatory and Fourier integral operators with degenerate canonical relations. \emph{Publ. Mat} \textbf{93} (2002): 141.

\bibitem{gressmanxiao}Gressman, P.T., Xiao, L. Maximal decay inequalities for trilinear oscillatory integrals of convolution type. \emph{Journal of Functional Analysis}, \textbf{271}, 3695-3726 (2016)

\bibitem{hormander2} H\"{o}rmander, L.: Oscillatory integrals and multipliers on $FL^p$. \emph{Ark. Maht.} \textbf{11}, 1-11 (1973)

\bibitem{pansampson} Pan, Y., Sampson, G., Szeptycki, P.: $L^{2}$ and $L^{p}$ estimates for oscillatory integrals and their extended domains. \emph{Studia Math}, \textbf{122}, 201-224 (1997)

\bibitem{PS1992} Phong, D.H., Stein, E.M.: Oscillatory integrals with polynomial phases. \emph{Invent. Math.}, \textbf{110}, 39-62 (1992)

\bibitem{PS1994} Phong, D.H., Stein, E.M.:  Models of degenerate Fourier integral operators and Radon transforms. \emph{Ann. of Math.}, \textbf{140},  703-722 (1994)

\bibitem{PS1997} Phong, D.H., Stein, E.M.:  The Newton polyhedron and oscillatory integral operators. \emph{Acta Math.}, \textbf{179}, 105-152 (1997)

\bibitem{PS1998} Phong, D.H., Stein, E.M.:  Damped oscillatory integral operators with analytic phases. \emph{Adv. in Math.}, \textbf{134}, 146-177 (1998)

\bibitem{PSS2001}Phong, D.H., Stein, E.M., Sturm, J.A.: Multilinear level set operators, oscillatory integral operators, and Newton polyhedra. \emph{Mathematische Annalen}, \textbf{319}, 573-596 (2001)

\bibitem{rychkov} Rychkov, V.S.: Sharp $L^2$ bounds for oscillatory integral operators with $C^\infty$ phases. \emph{Math. Z.}, \textbf{236}, 461-489 (2001)

\bibitem{seeger2} Seeger, A.: Radon transforms and finite type conditions. \emph{J. Amer. Math. Soc.}, \textbf{11}, 869-897 (1998)

\bibitem{Shi}Shi, Z.S.H.: Uniform estimates for oscillatory integral operators with polynomial phases. \emph{arXiv preprint} arXiv:1809.01300 (2018).

\bibitem{ShiXuYan} Shi, Z.S.H., Xu, S.Z., Yan, D.Y.: Damping estimates for oscillatory integral operators with real-analytic phases and its applications. To appear in \emph{Forum Mathematicum}, available online at \emph{arXiv preprint}  arXiv:1809.01298 (2018).

\bibitem{ShiYan}Shi, Z.S.H., Yan, D.Y.: Sharp $L^{p}$-boundedness of oscillatory integral operators with polynomial phases. \emph{Mathematische Zeitschrift}, \textbf{286}, 1277-1302 (2017)

\bibitem{stein} Stein, E.M.: Harmonic Analysis: Real variable methods, Orthogonality, and Oscillatory integrals. With the assistance of Timothy S. Murphy, Princeton Univ. Press, 1993. MR 95c:42002.

\bibitem{tangwan}Tang, W., Decay rates of oscillatory integral operators in $1+ 2$ dimensions. \emph{Forum Mathematicum}, \textbf{18}, 427-444 (2006)

\bibitem{Xiao2017}Xiao, L., Endpoint estimates for one-dimensional oscillatory integral operators. \emph{Advances in Mathematics}, \textbf{316}, 255-291 (2017)

\bibitem{xuyan} Xu, S.Z., Yan, D.Y.: Sharp $L^p$ decay of oscillatory integral operators with certain homogeneous polynomial phases in several variables. \emph{Science China Mathematics}, \textbf{62}, 649-662 (2019)

\bibitem{yangchanwoo} Yang, C.W.: Sharp estimates for some oscillatory integral operators on $\mathbb{R}^1$. \emph{Iillinois Journal of Mathematics}, \textbf{48}, 1093-1103 (2004)

\bibitem{yangchanwoo2} Yang, C.W.: $L^p$ improving estimates for some classes of Radon transforms. \emph{Trans. Amer. Math. Soc.}, \textbf{357}, 3887-3903 (2005)

\bibitem{varchenko} Varchenko, A.: Newton polyhedra and estimations of oscillatory integrals. \emph{Functional Anal. Appl.}, \textbf{18}, 175-196 (1976)


\end{thebibliography}
\end{document}